\documentclass[oneside,reqno,11pt]{amsart}
\usepackage{epsfig}
\usepackage{color}
\usepackage{amssymb,amsmath,amsthm,amstext,amsfonts}
\usepackage{color}
\usepackage{graphicx}
\newtheorem{theorem}{Theorem}[section]
\newtheorem{corollary}[theorem]{Corollary}
\newtheorem{proposition}[theorem]{Proposition}
\newtheorem{lemma}[theorem]{Lemma}

\theoremstyle{definition}

\newtheorem{definition}[theorem]{Definition}

\newtheorem{example}[theorem]{Example}

\newtheorem{remark}[theorem]{Remark}

\newcommand{\ZZ}{\mathbb Z}

\newcommand{\topp}{\operatorname{top}}

\newcommand{\Ptop}{P_{\topp}}

\newcommand{\cE}{\mathcal{E}}

\newcounter{main}

\makeatletter
\let\c@equation\c@theorem
\makeatother
\numberwithin{equation}{section}

\title[Variational principles for   nonautonomous dynamical systems]{ Variational principles for  nonautonomous dynamical systems}

\author[A. Bi\'s]{Andrzej Bi\'s}
\address{Faculty of Mathematics and Computer Science, {\L}\'od\'z University,  {\L}\'od\'z, Poland}
\email{andrzej.bis@wmii.uni.lodz.pl}

\keywords{Finitely additive set function; Pressure function; Variational principle; Nonautonomous dynamical systems.}
\subjclass[2010]{Primary:
37D35, 
37C85, 
Secondary:
43A05, 
}

\date{\today}

\begin{document}

\begin{abstract}
We study thermodynamical formalism of a discrete nonautonomous dynamical system determined by a sequence of continuous self-maps
of a compact metric space. Using the methods of Convex Analysis we get variational principles for pressure functions determined by the system.

\end{abstract}

\maketitle


\section{Introduction and structure of the paper}\label{se:introd}
In 1959, Kolmogorov and Sinai developed the metric
entropy from Shannon's information theory into ergodic
theory. Since then, the notion of entropy has played a crucial 
role in the study of dynamical systems.
In 1965, the notion of topological entropy, denoted here by $h_{top}(f),$ of a classical dynamical system, determined by a single continuous self-map $f:X \to X$ of a compact topological space $X,$ was introduced by Adler, Konheim and McAndrew  \cite{AKMc}
 as a topological conjugacy invariant. 
 In 1971, Bowen \cite{Bow1}  and Dinaburg \cite{Din} gave a new, but equivalent, definition in the case when the space under consideration is metrizable. 
 
 Another approach which resembles the construction of Hausdorff measure and Hausdorff dimension was used by Bowen \cite{Bow2} to extend the concept of topological entropy for 
noncompact sets. This approach was developed by Pesin and Pitskel \cite{PP} and presented in details by Pesin \cite{Pes} who elaborated a dimension theory of Carath\'eodory structures.

 Due to Krylov-Bogoliubov Theorem  \cite{BK} for a continuous map $f:X \to X$ there exists a Borel probability $f-$invariant measure $\mu$, defined on a Borel $\sigma-$algebra on $X.$ Therefore, the space of $\mathcal P_{f}(X)$  of $f$-invariant probability measures defined on the $\sigma$-algebra $\mathfrak B(X)$ of the Borel subsets of $X$ is not empty. Thus,
  one can define and study measure-theoretic entropy, denoted  here  by $h_{\mu}(f),$ of  a continuous transformation $f$ with respect to a measure $\mu \in \mathcal P_{f}(X).$ The famous variational principle relates topological
  entropy of a continuous self-map $f:X \to X,$ acting on a compact metric space $X,$ to its measure-theoretic entropy. It says that
  $$h_{top}(f)=\sup \{h_{\mu}(f): \mu \in \mathcal P_{f}(X)\}.$$

  Equivalently, it can be rewritten for the topological pressure $\Ptop(f,\varphi)$ of the map $f$  (Theorem 9.10 in \cite{Wa})      as
  \begin{equation}\label{e.VP}
\Ptop(f,\varphi)= \sup_{\mu\,\in\, \mathcal P_{f}(X)}\,\,\Big\{h_\mu(f) + \int_X \varphi\, d\mu\Big\},
\end{equation}
where $\varphi: X \to \mathbb R$ is a continuous potential.

Also, it is known a dual variational principle  (Theorem 9.12 in \cite{Wa})
for a continuous self-map $f:X \to X,$ with finite topological entropy. It expresses the measure-theoretic entropy $h_{\mu_0}(f)$  of $f$ through the topological pressure $\Ptop(f,\varphi),$
it says that $h_{\mu_0}(f) =\inf \{\Ptop(f,\varphi)-\int_X \varphi d\mu_0: \varphi \in C(X)\}$
~if~and~only~if~$h_{\mu_0}(f)$~is~upper~semi-continuous~at~$\mu_0,$ where $C(X)$ is the space of all continuous maps $\phi:X \to \mathbb{R}.$

One of the aims of the thermodynamic formalism is to establish the existence of measures with maximal entropy. Such measures, called equilibrium states, include as specific examples Gibbs
 measures and physical measures. With some additional assumptions a classical dynamical system  admits a measure of maximal entropy.

\smallskip

There are several proposals to extend the notions of measure-theoretic entropy and topological pressure to more general settings. One of them are  nonautonomous discrete dynamical
systems (in short: NADDS), determined by  sequences of continuous self-maps $f_{1,\infty}:=\{f_n: X \to X\}_{n \in \mathbb N},$ which are natural generalization of a classical dynamical system. 

Kolyada and Snoha \cite{KS}
introduced and studied the notion of topological entropy of NADDS. Many properties for such dynamical systems
have been studied for many years (cf. \cite{KS},  \cite{HWZ},  \cite{XZ},  \cite{KMS}, \cite{Mou}, \cite{XZ}, \cite{Bis}, \cite{Kaw}).

 For example, Huang, Wen and Zeng \cite{HWZ} introduced and studied the topological pressure of NADDS. Equivalent definitions of topological pressures and basic properties of these notions one can find in \cite{KCL}.

 A main difficulty in setting up the variational principle for NADDS is a lack of common invariant measures, there is no a counterpart of Krylov-Bogoliubov Theorem.

Kawan \cite{Kaw} considers at the same time a sequence of probability spaces $X_{1,\infty}:=\{X_n,\mu_n\}_{n \in \mathbb{N}}$ and a sequence of continuous maps $f_{1,\infty}:=\{f_n: X_n \to X_{n+1}\}_{n \in \mathbb N}$ defined on the sequence of compact metric spaces $\{X_n\}_{n \in \mathbb{N}}.$ A sequence of measures $\mu_{1,\infty}:=\{\mu_n-\text{is a measure on }X_n\}_{n \in \mathbb{N}}$
is called $f_{1,\infty}-$invariant if $f_n\mu_n=\mu_{n+1},$ for any $n \in \mathbb{N}.$ Let $\mathcal{P}_n$ be a finite measurable partition of $X_n$ and let $\mathcal{P}_{1,\infty}:=\{\mathcal{P}_n\}_{n \in \mathbb{N}}$ 
be the sequence of all finite partitions. Recall that the entropy of a finite measurable partition $P = \{P_1,...,P_m\}$ of a probability space $(X,\mathcal{A}, \mu),$ where $\mathcal{A}$ is a
$\sigma-$algebra of $X,$ is defined as $H_{\mu}(P) :=-\sum_{i=1}^{m} \mu(A)\cdot \log(\mu(A)).$ Kawan  defines topological entropy $h_{top}(f_{1,\infty})$ in the same way as Kolyada and Snoha in \cite{KS}. Since  the measure entropy  of a partition $\mathcal{P}_{1,\infty}$ defined as follows
$$h_{\mu_{1}}(f_{1,\infty}, \mathcal{P}_{1,\infty}):=\limsup_{n \to \infty} \frac{1}{n} H_{\mu_1} (\bigvee_{i=0}^{n-1} f_1^{-i}\mathcal{P}_{i+1})$$
can be infinite even for simple systems (see Example 14 and Example 18 in \cite{Kaw}) he chooses  a sufficiently nice subclass $\mathcal{E}_{max}$ from the class of all sequences $\mathcal{P}_{1,\infty},$ called maximal admissible class (for definition see Subsection 3.2 of \cite{Kaw}), to get a reasonable notion of metric entropy. But to get partial variational principle he even chooses a subclass 
$\mathcal{E}_{M} \subset \mathcal{E}_{max},$ called Misiurewicz class and defined in Subsection 4.1 of \cite{Kaw}, to define measure entropy 
$$h_{\mathcal{E}_{M}}(f_{1,\infty}):=\sup_{\mathcal{P}_{1,\infty} \in \mathcal{E}_{M}}h_{\mu_{1}}(f_{1,\infty}, \mathcal{P}_{1,\infty}).$$ His main result, i.e. the partial variational principle, is as follows.\\
Theorem (28 in \cite{Kaw}). For an equicontinuous topological nonautonomous dynamical system $(X_{1,\infty}, f_{1,\infty})$ with $f_{1,\infty}-$invariant sequence $\mu_{1,\infty}$ it holds that
$$h_{\mathcal{E}_{M}}(f_{1,\infty}) \le h_{top}(f_{1,\infty}).$$
\color{black}

 Local measure entropies, in sense of Brin and Katok \cite{BK}, are well defined for  discrete nonautonomous dynamical \color{black} systems. In this context, there were several approaches to relate the topological entropy of a system to its local measure entropies. 

For example, some generalization of the result of Kawan \cite{Kaw} was given by Bi\'s \cite{Bis} who applied local lower measure entropy $h_{\mu}^{Low}(f_{1,\infty})(x)$ and 
local upper measure entropy $h_{\mu}^{Upp}(f_{1,\infty})(x)$ of the system at a point $x \in X$, with respect to a Borel probability measure $\mu$ on $X.$ 

Theorem 1 in \cite{Bis} says that for a Borel probability measure $\mu$ on $X$ a subset $E \subset X$ and $s>0$ one has:
If for any point $x \in E$ the inequality $h_{\mu}^{Low}(f_{1,\infty})(x) \ge s$ holds and $\mu(E)>0,$ then $h_{top}(f_{1,\infty},E) \ge s.$

Feng and Huang \cite{FH} proved that the topological entropy of NADDS coincides with the supremum of integrals of lower local measure entropy function, where the supremum is taking over all Borel probability measures with full support.

\bigskip

In the paper, we prove that for the topological pressure of a  nonautonomous dynamical system the abstract variational principle and dual abstract variational principle (Theorem A) hold. In Theorem B, assuming that the classical  variational principle holds for topological pressure, we show the relations between different measure entropies and invariant measures of the system. Next, we use Misiurewicz construction and introduce Misiurewicz pressure and Misiurewicz entropy. In Theorem C we prove that for Misiurewicz pressure the abstract variational principle and abstract dual variational principle hold as well. In Theorem D, assuming that the classical  variational principle holds for Misiurewicz pressure, we show the relations between different measure entropies and invariant measures of the system.

The paper is organized as follows.
In Section 2, we introduce Convex Analysis approach to NSDDS. In particular we present necessary notions and ideas to formulate Theorem A on abstract variational principle for the topological 
pressure of a nonautonomous dynamical system $f_{1,\infty}:=\{f_n: X \to X\}_{n \in \mathbb N}$. We show that topological pressure determines a pressure function. The variational principle stated in Theorem A for topological pressure ensures that there always exist  measures for which the right-hand side of (2.1) attains the maximum. In Corollary 1 we state that such a measure is unique. We define the space $\mathcal{P}_{f_{1,\infty}}(X)$  of $f_{1,\infty}-$invariant measures and provide
examples (Example 2.3 and Example 2.4)  of nonautonomous dynamical systems for which this space $\mathcal{P}_{f_{1,\infty}}(X)$ is not empty.
In Theorem B, under the assumption that for a nonautonomous dynamical system $f_{1,\infty}:=\{f_n: X \to X\}_{n \in \mathbb N}$ we have also classical variational principle for topological pressure, we present
relations between different entropies of the system. In particular we prove that  $f_{1,\infty}-$invariant measures are determined by the positivity of the abstract measure entropy.
Next we define Misiurewicz entropy and Misiurewicz pressure of a nonautonomous dynamical system $f_{1,\infty}:=\{f_n: X \to X\}_{n \in \mathbb N}.$   We show that Misiurewicz pressure determines a pressure function. We state Theorem C, expressed for the Misiurewicz pressure, which is an analogue of Theorem A expressed for the topological pressure of a nonautonomous dynamical system.
The variational principle stated in Theorem C for Misiurewicz pressure ensures that there always exist  measures for which the right-hand side of (2.6) attains the maximum. In Corollary 2 we state that such a measure is unique. Next we state Theorem D, expressed for the Misiurewicz pressure, which is an analogue of Theorem B expressed for the topological pressure. 

 Section 3 is devoted to the main result of Bi\'s et al. \cite{BCMV}, where 
the authors used methods from Convex Analysis to obtain upper semi-continuous  entropy-like maps adapted to any pressure function on a suitable Banach space, and without summoning any dynamics. An abstract pressure function (Definition 3.2) defined on a Banach space is introduced. In Theorem 3.3 stated for an abstract pressure function we obtain an abstract measure entropy,
abstract variational principle and abstract dual variational principle.

In Section 4, we introduce in details the notion of topological pressure of NADDS, apply the results of  \cite{BCMV}, and provide proofs of  Theorem A and Theorem B.
In Section 5, we introduced   Misiurewicz pressure of a nonautonomous dynamical system, based on \cite{Mis}, and present proofs of Theorem C and Theorem D. \color{black}
\section{Main results}
In this self-contained paper we present two different definitions of  pressures of NADDS determined by continuous self-maps $f_{1,\infty}:=\{f_n: X \to X\}_{n \in \mathbb N}.$ The first one is called the topological pressure (see: Definition 4.1) and  the second one is called the  Misiurewicz \color{black}pressure (see: Definition 5.1). To both pressures 
 we apply the main results of \cite{BCMV}, where the authors used methods from Convex Analysis to obtain upper semi-continuous  entropy-like maps adapted to any abstract  pressure function on a suitable Banach space, and without summoning any dynamics.

For a compact metric space $(X,d)$ we denote by $P(X)$ the space of all Borel probability measures on $X,$ and 
by $C(X)$ the space of all continuous maps $\varphi: X \to \mathbb{R},$ called potentials.
Let $\Ptop(f_{1,\infty}, \,\varphi)$  (resp., $h_{top}(f_{1,\infty})$) denote the topological pressure (resp., the topological entropy) a NADDS $f_{1,\infty}= \{ f_n: X \to X \}^{\infty}_{n=1},$ with respect to
the potential $\varphi \in C(X).$ 
Applying the strategy of \cite{BCMV} to topological pressure of NADDS we get the following  abstract variational principle and abstract dual variational principle.
\bigskip

\noindent {\bf Theorem A.}
For a  nonautonomous discrete dynamical system  $f_{1,\infty}= \{ f_n: X \to X \}^{\infty}_{n=1},$ defined on a compact metric space $(X,d)$ with $h_{top}(f_{1,\infty}) < +\infty$, there exists
an  upper semi-continuous function ${\mathfrak h}_{f_{1,\infty}}: \,\mathcal{P}(X) \, \to \, \mathbb{R}$ such that
\begin{equation}\label{eq:gvp}
\Ptop(f_{1,\infty}, \,\varphi) = \max_{\mu \, \in \, \mathcal{P}(X)}\, \left\{{\mathfrak h}_{f_{1,\infty}}(\mu) + \int_X  \varphi \, d\mu \right\} \quad \quad \forall \, \varphi \in C(X)
\end{equation}
and
\begin{equation}\label{eq:gve}
{\mathfrak h}_{f_{1,\infty}}(\mu) \,\,=\,\, \inf_{\varphi \, \in \, {C(X)}}\,\, \left\{\Ptop(f_{1,\infty}, \,\varphi) - \int\varphi \, d\mu\right\} \quad \quad \forall \,\mu \in  \mathcal{P}(X).
\end{equation}

\bigskip

\noindent {\bf Corollary 1.} For a  nonautonomous  discrete dynamical system $f_{1,\infty}= \{ f_n: X \to X \}^{\infty}_{n=1},$ with finite topological entropy  $h_{top}(f_{1,\infty}),$
and a continuous potential $\varphi \in C(X)$ there exists a unique measure $\mu \in P(X)$ such that $\Ptop(f_{1,\infty}, \,\varphi) =  {\mathfrak h}_{f_{1,\infty}}(\mu) + \int_X  \varphi \, d\mu.$
\color{black}

\bigskip

In the next theorem we introduce a new entropy like quantity $h^*_{f_{1,\infty}}(\mu)$ (for definition see Subsection 4.3) for a NADDS and we present some properties of an abstract measure-entropy of nonautonomous dynamical systems. 

 Let $\mathcal{P}_{f_{1,\infty}}(X)$ be the space of $f_{1,\infty}-$invariant measures, i.e.
 $$\mathcal{P}_{f_{1,\infty}}(X):=\{\mu \, \in \, \mathcal{P}(X): \int (\psi \circ f_n)d\mu=\int \psi d\mu, \forall_{n \in \mathbb{N}},\psi \in C(X) \}.$$

Below we present examples of NADDS admitting invariant measures.
\begin{example} Assume that $(X,d)$ is a compact metric space and there are two  commuting continuous maps $f,g :X \to X,$  that are different from the identity map on $X.$
Then, a Borel probability measure $\mu$ which is $f-$invariant, is $g-$invariant as well (see \cite{PWal} p. 98). So, for $f_{1,\infty}= \{ f_n:X \to X \}^{\infty}_{n=1}$ consisting of pairwise commuting maps (i.e. $f_n \circ f_m=f_m \circ f_n$ for any $m,n \in \mathbb{N}$) there exists  a common $f_{1,\infty}-$invariant measure
$\nu$ and  $\mathcal{P}_{f_{1,\infty}}(X) \not= \emptyset.$

\end{example}
\begin{example}
 The orthogonal group $O(m)$ acting on the $m-$dimensional sphere  $S^m$ is a non-abelian group admitting  $O(m)-$invariant Haar measure. Therefore, for any
$f_{1,\infty}= \{ f_n: S^m \to S^m \}^{\infty}_{n=1},$ with $f_n \in O(m)$ for any $n \in \mathbb{N},$  the Haar measure is $f_{1,\infty}-$ invariant and  $\mathcal{P}_{f_{1,\infty}}(X) \not= \emptyset.$
\end{example}
\color{black}

\bigskip
\noindent {\bf Theorem B.}
\emph{Given a  nonautonomous discrete dynamical system  $f_{1,\infty}= \{ f_n: X \to X \}^{\infty}_{n=1},$ defined on a compact metric space $(X,d).$
 Let $\Ptop(f_{1,\infty}, \cdot): C(X) \to \mathbb{R}$ be the pressure function and ${\mathfrak h}_{f_{1,\infty}}: \mathcal{P}(X) \to \mathbb{R}$  its variational metric entropy given by the equality (2.2).
  Assume that  $\mathcal{P}_{f_{1,\infty}}(X) \not= \emptyset$ \color{black}  and   $\Ptop(f_{1,\infty}, \cdot)$ also satisfies a variational principle
\begin{equation}\label{eq:cvp}
\Ptop(f_{1,\infty}, \,\varphi) = \sup_{\mu \, \in \, \mathcal{P}_{f_{1,\infty}}(X)}\, \left\{h_{f_{1,\infty}}(\mu) + \int \varphi \, d\mu \right\} \quad \quad \forall \, \varphi \in C(X)
\end{equation}
with respect to a non-negative metric entropy map $h_{f_{1,\infty}}: \mathcal{P}_{f_{1,\infty}}(X) \to [0, +\infty[$. Then
\begin{itemize}
\item[(a)] If $\varphi \in C(X)$ and $\mu_\varphi \in \mathcal{P}(X)$ attains the maximum in
(2.3), then $\mu_\varphi$ is $f_{1,\infty}$-invariant, that is,
$$\int (\psi\circ f_n) \,d\mu_\varphi = \int \psi \,d\mu_\varphi \quad \quad \forall\, n \in \mathbb{N} \quad \forall \, \psi \in C(X).$$
\smallskip
\item[(b)] Given $\mu \in \mathcal{P}(X)$, one has $\,\,\mu \in \mathcal{P}_{f_{1,\infty}}(X)$ if and only if ${\mathfrak h}_{f_{1,\infty}}(\mu) \, \geqslant \, 0$.
\smallskip
\item[(c)] For every $\mu \in \mathcal{P}_{f_{1,\infty}}(X)$, one has $\,\,0 \,\leqslant \, h_{f_{1,\infty}}(\mu) \,\leqslant\, {\mathfrak h}_{f_{1,\infty}}(\mu) \leqslant\, {h}^*_{f_{1,\infty}}(\mu) $.
\smallskip
\item[(d)] For every $\varphi \in C(X)$
$$\Ptop(f_{1,\infty}, \varphi) \,=\, \max_{\mu \,\in \,\mathcal{P}_{f_{1,\infty}}(X)}\,\left\{{\mathfrak h}_{f_{1,\infty}}(\mu) + \int \varphi \, d\mu\right\} \, \,= \max_{\mu \,\in \,\mathcal{P}_{f_{1,\infty}}(X)}\,\left\{{h}^*_{f_{1,\infty}}(\mu) + \int \varphi \, d\mu\right\} \,.$$
\end{itemize}}
\bigskip
In the second part of the paper we consider Misiurewicz pressure $P_{Mis}(f_{1,\infty}, \varphi)$ (see  Definition 5.1)
and Misiurewicz entropy $h_{Mis}(f_{1,\infty})$ of $f_{1,\infty}= \{ f_n: X \to X \}^{\infty}_{n=1},$ defined by the equality  $h_{Mis}(f_{1,\infty}):=P_{Mis}(f_{1,\infty}, \phi_0),$ where $\phi_0 \equiv 0.$ We get the following results, similar to Theorem A and Theorem B.
\bigskip
\color{black}

\noindent {\bf Theorem C.}
Given a  nonautonomous discrete dynamical system  $f_{1,\infty}= \{ f_n: X \to X \}^{\infty}_{n=1},$ defined on a compact metric space $(X,d),$ with $h_{Mis}(f_{1,\infty},X) < +\infty.$ Then, there exists
an  upper semi-continuous function ${\mathfrak h}^{Mis}_{f_{1,\infty}}: \,\mathcal{P}(X) \, \to \, \mathbb{R}$ such that
\begin{equation}\label{eq:gvp}
P_{Mis}(f_{1,\infty},\varphi)= \max_{\mu \, \in \, \mathcal{P}(X)}\, \left\{{\mathfrak h}^{Mis}_{f_{1,\infty}}(\mu) + \int_X  \varphi \, d\mu \right\} \quad \quad \forall \, \varphi \in C(X)
\end{equation}
and
\begin{equation}\label{eq:gve}
{\mathfrak h}^{Mis}_{f_{1,\infty}}(\mu) \,\,=\,\, \inf_{\varphi \, \in \, {C(X)}}\,\, \left\{P_{Mis}(f_{1,\infty}, \varphi) - \int\varphi \, d\mu\right\} \quad \quad \forall \,\mu \in  \mathcal{P}(X).
\end{equation}

\noindent {\bf Corollary 2.} For a  nonautonomous discrete dynamical system $f_{1,\infty}= \{ f_n: X \to X \}^{\infty}_{n=1},$ with $h_{Mis}(f_{1,\infty})<\infty$ 
and a continuous potential $\varphi \in C(X)$ there exists a unique measure $\mu_1 \in P(X)$ such that $P_{Mis}(f_{1,\infty}, \,\varphi) =  {\mathfrak h}_{f_{1,\infty}}^{Mis}(\mu_1) + \int_X  \varphi \, d\mu_1.$

\bigskip
\color{black}
In the next theorem we introduce a new entropy-like quantity $h^{*Mis}_{f_{1,\infty}}$ ( see Subsection 5.3) for a NADDS and we present some properties of an abstract measure-entropy of
$f_{1,\infty}= \{ f_n: X \to X \}^{\infty}_{n=1}.$
\bigskip

\noindent {\bf Theorem D.}
\emph{Given a nonautonomous discrete  dynamical system  $f_{1,\infty}= \{ f_n: X \to X \}^{\infty}_{n=1},$ defined on a compact metric space $(X,d).$
 Let $P_{Mis}(f_{1,\infty}, \cdot): C(X) \to \mathbb{R}$ be the Misiurewicz pressure function and ${\mathfrak h}^{Mis}_{f_{1,\infty}}: \mathcal{P}(X) \to \mathbb{R}$ denote its variational metric entropy given by the equality \eqref{eq:gve}. Assume that  $\mathcal{P}_{f_{1,\infty}}(X) \not= \emptyset$ \color{black} and $P_{Mis}(f_{1,\infty}, \cdot)$ also satisfies a variational principle
\begin{equation}\label{eq:cvp}
P_{Mis}(f_{1,\infty}, \varphi) = \sup_{\mu \, \in \, \mathcal{P}_{f_{1,\infty}}(X)}\, \left\{h^{Mis}_{f_{1,\infty}}(\mu) + \int \varphi \, d\mu \right\} \quad \quad \forall \, \varphi \in C(X)
\end{equation}
with respect to a non-negative metric entropy map $h^{Mis}_{f_{1,\infty}}: \mathcal{P}_{f_{1,\infty}}(X) \to [0, +\infty[$. Then
\begin{itemize}
\item[(a)] If $\varphi \in C(X)$ and $\mu_\varphi \in \mathcal{P}(X)$ attains the maximum in \eqref{eq:gvp}, then $\mu_\varphi$ is $f_{1,\infty}$-invariant, that is,
$$\int (\psi\circ f_n) \,d\mu_\varphi = \int \psi \,d\mu_\varphi \quad \quad \forall\, n \in \mathbb{N} \quad \forall \, \psi \in C(X).$$
\smallskip
\item[(b)] Given $\mu \in \mathcal{P}(X)$, one has $\,\,\mu \in \mathcal{P}_{f_{1,\infty}}(X)$ if and only if ${\mathfrak h}^{Mis}_{f_{1,\infty}}(\mu) \, \geqslant \, 0$.
\smallskip
\item[(c)] For every $\mu \in \mathcal{P}_{f_{1,\infty}}(X)$, one has $\,\,0 \,\leqslant \, h^{Mis}_{f_{1,\infty}}(\mu) \,\leqslant\, {\mathfrak h}^{Mis}_{f_{1,\infty}}(\mu) \leqslant\, {h}^{*Mis}_{f_{1,\infty}}(\mu) $.
\smallskip
\item[(d)] For every $\varphi \in C(X)$
$$P_{Mis}(f_{1,\infty}, \varphi) \,=\, \max_{\mu \,\in \,\mathcal{P}_{f_{1,\infty}}(X)}\,\left\{{\mathfrak h}^{Mis}_{f_{1,\infty}}(\mu) + \int \varphi \, d\mu\right\} \, \,= \max_{\mu \,\in \,\mathcal{P}_{f_{1,\infty}}(X)}\,\left\{{h}^{*Mis}_{f_{1,\infty}}(\mu) + \int \varphi \, d\mu\right\} \,.$$
\end{itemize}}
\bigskip

\begin{remark} In context of group actions or semigroup actions results similar to Theorem B or Theorem D one can find in \cite{BCMV2}.
\end{remark}

\section{Abstract variational principle}\label{se:statements}
This section is on the main result of Bi\'s et al. \cite{BCMV}, where
the authors used methods from convex analysis to obtain upper semi-continuous  entropy-like maps adapted to any pressure function on a suitable Banach space, and without summoning any dynamics. Thus, this strategy is unspecific enough to be applied to either the classical topological pressure associated to a single dynamics (as done in \cite{BCMV}) or to the recently defined notions of topological pressure and  Misiurewicz \color{black}pressure for  nonautonomous dynamical systems, we address in this work. Such an abstract variational principle can be applied to obtain variational principles using either the space of probability measures or the space of finitely additive set functions. To present the main result of  \cite{BCMV}, let us first introduce the basis notions and notations.

Let $(X,d)$ be a locally compact metric space (with the distance $d$), $\mathfrak{B}$ be its $\sigma$-algebra of Borel sets, $\mathcal{P}(X)$ denote the space of Borel probability measures on $X$ with the weak$^*$-topology, $\mathcal{P}_a(X)$ stand for the set of Borel real-valued normalized finitely additive set functions with the total variation distance (which we will simply call \emph{finitely additive probabilities}), and $C(X)$ the space of continuous maps $\psi \,\colon X \to \mathbb{R}$ with the supremum norm (whose elements are called potentials).

\smallskip

 In what follows we will consider a Banach space ${\mathbf B}$ over the field $\mathbb{R}$ equal to either
\begin{eqnarray}\label{def:Banach}
L^\infty(X) &=& \big\{\varphi: \, X \,\to\, \mathbb{R} \,\,|\,\, \varphi \text{ is measurable and bounded}\big\} \nonumber\\
&\smallskip& \\
\text{ or } \,\,\,\,C_c(X) &=& \big\{\varphi \in C(X) \,\,|\,\, \varphi \text{ has compact support}\big\} \nonumber
\end{eqnarray}
endowed with the norm $\|\varphi\|_\infty = \sup_{x \, \in \, X}\, |\varphi(x)|$. The Riesz representation theorem asserts that the dual of $C_c(X)$ is identified with the collection of all finite signed measures on $(X, \mathfrak{B})$, whose positive normalized continuous functionals correspond to the space $\mathcal{P}(X)$. It is known that the latter set is compact when equipped with the weak$^*$ topology. On the other hand, the dual of $L^\infty(X)$ is isometrically isomorphic to the space of regular finitely additive bounded signed measures on $(X, \mathfrak{B})$ with the total variation norm, whose subset of positive normalized elements is represented by $\mathcal{P}_a(X)$.

\begin{definition}\label{def:pressure-function}
\emph{A function $\Gamma: {\mathbf B} \to \mathbb{R}$ is called a \emph{pressure function} if it satisfies the following conditions:
\begin{enumerate}
\item[(C$_1$)] \emph{Increasing}: $\,\,\varphi \leqslant \psi \quad \Rightarrow \quad \Gamma(\varphi) \leqslant \Gamma(\psi) \quad \forall\,\varphi, \, \psi \in {\mathbf B}$.
\medskip
\item[(C$_2$)] \emph{Translation invariant}: $\,\,\Gamma(\varphi + c) = \Gamma(\varphi) + c \quad \forall \,\varphi \in {\mathbf B} \quad \forall \,c \in \mathbb{R}$.
\medskip
\item[(C$_3$)] \emph{Convex}: $\Gamma(t \,\varphi + (1-t) \,\psi) \leqslant t\, \Gamma(\varphi) + (1-t)\, \Gamma(\psi) \quad \forall \,\varphi, \, \psi \in {\mathbf B} \quad \forall\, t \in \,[0,1]$.
\end{enumerate}}
\end{definition}

For a pressure function $\Gamma: {\mathbf B} \to \mathbb{R}$ we get the following.
$$\Gamma(\psi)-\|\varphi-\psi\|_\infty = \Gamma(\psi-\|\varphi-\psi\|_\infty) \leqslant \Gamma(\varphi)\leqslant \Gamma(\psi+\|\varphi-\psi\|_\infty)= \Gamma(\psi)+\|\varphi-\psi\|_\infty,$$
$| \Gamma(\varphi)  - \Gamma(\psi) | \leqslant \|\varphi-\psi\|_\infty$ for every $\varphi,\, \psi\in {\mathbf B}$. The following is the main abstract variational principle and the upper semi-continuous metric entropy map arising from convex analysis that we will use later.

\begin{theorem}\label{thm:main-lemma} \cite[Theorem 1]{BCMV}
Let $(X,d)$ be a metric space and $\Gamma: {\mathbf B} \to \mathbb{R}$ be a pressure function.
\begin{itemize}
\item[(a)] If $X$ is locally compact and ${\mathbf B} = C_c(X)$ then there exists
an  upper semi-continuous entropy map ${\mathfrak h}$ such that
\begin{equation}\label{eq:var-P}
\Gamma(\varphi) \,\,=\,\, \max_{\mu \, \in \, \mathcal{P}(X)}\,\,\, \left\{{\mathfrak h}(\mu) + \int \varphi \, d\mu \right\} \quad \quad \forall\, \varphi \in {C_c(X)}
\end{equation}
Besides,
$${\mathfrak h}(\mu) \,\,=\,\, \inf_{\varphi \, \in \, {C_c(X)}}\,\, \left\{\Gamma(\varphi) - \int\varphi \, d\mu\right\} \quad \quad \forall\, \mu \in  \mathcal{P}(X)$$
and if $\alpha: \mathcal{P}_a(X) \,\to\, \mathbb{R}\cup\{+\infty, -\infty\}$ is another function for which \eqref{eq:var-P} holds then $\alpha\leqslant \mathfrak h$.
Furthermore, if $\cE_\varphi(\Gamma)$ denotes the set of maximizing elements in \eqref{eq:var-P} then there is a residual subset
$\mathfrak R \subset C_c(X)$ such that $\#\,\cE_\varphi(\Gamma) = 1$ for every $\varphi\in \mathfrak R$.
\medskip
\item[(b)] If ${\mathbf B=L^\infty(X)}$, then
\begin{equation}\label{eq:var}
\Gamma(\varphi) \,\,=\,\, \max_{\mu \, \in \, \mathcal{P}_a(X)}\,\,\, \left\{{\mathfrak h}(\mu) + \int \varphi \, d\mu \right\} \quad \quad \forall\, \varphi \in L^\infty(X)
\end{equation}
where
${\mathfrak h}(\mu) \,\,=\,\, \inf_{\varphi \, \in \, {\mathbf B}}\,\, \left\{\Gamma(\varphi) - \int\varphi \, d\mu\right\}$ for every $\mu \in  \mathcal{P}_a(X)$.
Besides, the map ${\mathfrak h}$ is  upper semi-continuous
and maximal among any other upper semi-continuous functions attaining the supremum in ~\eqref{eq:var}.
\end{itemize}
\end{theorem}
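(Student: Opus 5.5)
The plan is to reproduce the convex-analysis argument of \cite{BCMV}, i.e.\ Fenchel--Legendre duality applied to the convex, $1$-Lipschitz functional $\Gamma$ on ${\mathbf B}$. For both (a) and (b) I will first \emph{define} the candidate entropy
$${\mathfrak h}(\mu) \,=\, \inf_{\varphi\in{\mathbf B}}\Big\{\Gamma(\varphi)-\int\varphi\,d\mu\Big\}$$
on the positive normalized elements of ${\mathbf B}^*$, that is, on $\mathcal{P}(X)$ or on $\mathcal{P}_a(X)$. Three properties follow directly from this definition.
\begin{enumerate}
\item \emph{Weak inequality.} Taking $\varphi$ itself in the infimum gives ${\mathfrak h}(\mu)+\int\varphi\,d\mu\leqslant\Gamma(\varphi)$ for every $\mu$ and every $\varphi$.
\item \emph{Upper semi-continuity.} ${\mathfrak h}$ is an infimum of the weak$^*$-continuous affine maps $\mu\mapsto \Gamma(\varphi)-\int\varphi\,d\mu$, so it is u.s.c.
\item \emph{Maximality.} If $\alpha$ also satisfies the variational identity, then $\alpha(\mu)\leqslant \Gamma(\varphi)-\int\varphi\,d\mu$ for all $\varphi$. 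Taking the infimum over $\varphi$ gives $\alpha\leqslant{\mathfrak h}$.
\end{enumerate}

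The key step is attaining equality in (1). Fix $\varphi_0\in{\mathbf B}$. Since $\Gamma$ is convex and continuous (it is $1$-Lipschitz by the inequality shown just before the theorem), the Hahn--Banach theorem gives a subgradient $\ell\in{\mathbf B}^*$ at $\varphi_0$:
$$\Gamma(\varphi)-\Gamma(\varphi_0)\geqslant \ell(\varphi-\varphi_0)\quad\forall\,\varphi.$$
Concretely, this comes from separating the open epigraph of $\Gamma$ from the point $(\varphi_0,\Gamma(\varphi_0))$. I then check that $\ell$ is a probability.
\begin{itemize}
\item \emph{Normalization.} Plugging $\varphi=\varphi_0\pm c$ and using (C$_2$) gives $\pm c\geqslant \pm c\,\ell(1)$, hence $\ell(1)=1$. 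For ${\mathbf B}=C_c(X)$ with $X$ noncompact, where constants are not in ${\mathbf B}$, I will instead use $\|\ell\|\leqslant1$ (from the Lipschitz bound) together with approximating $1$ by compactly supported bump functions.
\item \emph{Positivity.} Plugging $\varphi=\varphi_0-\psi$ with $\psi\geqslant0$ and using (C$_1$) gives $0\geqslant\Gamma(\varphi_0-\psi)-\Gamma(\varphi_0)\geqslant-\ell(\psi)$, so $\ell(\psi)\geqslant0$.
\end{itemize}
By the Riesz representation (respectively the identification of $L^\infty(X)^*$ with finitely additive measures), $\ell$ is integration against some $\mu_0\in\mathcal{P}(X)$ (respectively $\mathcal{P}_a(X)$). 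The subgradient inequality then reads $\Gamma(\varphi)-\int\varphi\,d\mu_0\geqslant\Gamma(\varphi_0)-\int\varphi_0\,d\mu_0$ for all $\varphi$. Hence ${\mathfrak h}(\mu_0)=\Gamma(\varphi_0)-\int\varphi_0\,d\mu_0$, so the maximum is attained and equals $\Gamma(\varphi_0)$. In particular ${\mathfrak h}$ is real-valued at such $\mu_0$.

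For the genericity statement in (a), I would observe that $\cE_{\varphi}(\Gamma)$ is exactly the subdifferential $\partial\Gamma(\varphi)$ (after normalization), by the computation above run in reverse. I would then invoke Mazur's theorem: a continuous convex function on a separable Banach space is Gâteaux differentiable on a dense $G_\delta$ set, and Gâteaux differentiability means the subdifferential is a singleton. Separability of $C_c(X)$ needs $X$ to be second countable (e.g.\ $\sigma$-compact), and this hypothesis may need to be added or checked. I expect this step, together with the normalization $\ell(1)=1$ in the noncompact $C_c$ case, to be the main technical obstacle. In the application in this paper $X$ is compact, so both issues disappear.
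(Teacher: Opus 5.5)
The paper does not prove this statement; it is quoted from \cite[Theorem 1]{BCMV}. The only trace of the underlying argument is in the proof of Corollary 1, which uses that equilibrium states coincide with tangent functionals and ties uniqueness to G\^ateaux differentiability. Your proposal is essentially that convex-analysis argument. For $\mathbf B=L^\infty(X)$, and for compact $X$ in (a), it is correct and complete. Compact $X$ is the only case the paper uses; there $C_c(X)=C(X)$ is a separable Banach space containing the constants. Hahn--Banach gives a subgradient at every point, and (C$_1$) and (C$_2$) force every subgradient to be a (finitely additive) probability, so $\cE_\varphi(\Gamma)=\partial\Gamma(\varphi)$. The maximum is then attained without any compactness argument, and Mazur's theorem yields the residual set.

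The step that would fail is your patch for noncompact $X$ in (a). From $\|\ell\|\leqslant 1$ and bump functions $\psi_k\uparrow 1$ you only obtain $\mu_0(X)\leqslant 1$. Nothing forces $\ell(\psi_k)\to 1$, because (C$_2$) has no meaning on $C_c(X)$ and mass can escape to infinity.

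This is a defect of the statement rather than of your method. Take $X=\mathbb{R}$ and the functional $\Gamma(g+c)=c$ on $C_c(X)\oplus\mathbb{R}$. It is increasing, translation invariant and linear, and its restriction to $C_c(X)$ is identically $0$. But $\inf_{g\in C_c(X)}\{-\int g\,d\mu\}=-\infty$ for every $\mu\in\mathcal{P}(X)$, so no function on $\mathcal{P}(X)$ can satisfy the variational identity of (a).

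So part (a) should be read with $X$ compact. In a genuinely locally compact setting one would have to change the space of measures, for instance by allowing sub-probabilities or compactifying. As you suspected, one would also need to pass to the completion $C_0(X)$ and assume separability for the Baire-category step.
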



\section{Topological entropy and topological pressure of NADDS}

\subsection{Basic definitions and properties} 

In this section we introduce a topological entropy and topological pressure of NADDS
given by a  sequence $f_{1,\infty}= \{ f_n:X \to X \}^{\infty}_{n=1}$ of continuous maps of a compact metric space $(X,d).$ The sequence of self-maps determines a sequence of metrics
$$d_n (x, y) := max\{d( f _i \circ f_{i-1} \circ ...\circ f_1(x), f _i \circ f_{i-1} \circ ...\circ f_1(y)) : 1\le i\le n-1\},$$
where $x, y \in X.$
Following Kolyada and Snoha [13], we define a topological entropy of the NADDS determined by $f_{1,\infty}$
 as follows.  Fix a positive integer $n$ and 
 $\epsilon>0.$ We say that a subset $A \subset X$ is $(n,\epsilon)-$separated if for any distinct points
 $a_1,a_2 \in A$
 $$d_n (a_1, a_2) := max\{d( f _i \circ f_{i-1} \circ ...\circ f_1(a_1), f _i \circ f_{i-1} \circ ...\circ f_1(a_2) : 1\le i\le n-1\}\ge \epsilon.$$
 Let  $s(n, \epsilon, f_{1,\infty}):= max\{card(A): A~is~(n,\epsilon)-separated~subset~of~X\}.$
 
 \begin{definition} The quantity
 $$h_{top}(f_{1,\infty}):=\lim_{\epsilon \to 0} \limsup_{n \to \infty} \frac{1}{n} log (s(n, \epsilon, f_{1,\infty}))$$
 is called the {\it topological entropy} of a NADDS determined by $f_{1,\infty}.$
 
 \end{definition}

{\it Topological pressure} of a NADDS, determined by $f_{1,\infty},$ with respect to a potential $\varphi \in C(X)$ we define as follows
$$ P_{top}(f_{1,\infty}, \varphi):=\lim_{\varepsilon \to 0^{+}} \limsup_{n \to \infty} \frac{1}{n} \log P_n(f_{1,\infty}, \varphi ,\varepsilon),$$ 
where for any $n\in \mathbb N$ and $\varepsilon >0$
$$P_n(f_{1,\infty}, \varphi ,\varepsilon):=\sup _{E}\big \{\sum_{x \in E}e^{S_n^{f_{1,\infty}}\varphi(x)}: E~\text{is}~(n, \varepsilon)-\text{separated}~\text{subset~of}~X\}$$
and
$$S_n^{f_{1,\infty}}\varphi(x):=\varphi(x)+\varphi(f_1(x))+\varphi(f_2\circ f_1(x))+\cdots +\varphi(f_{n-1}\circ f_{n-2} \circ...\circ f_1(x)).$$
 Notice that for the potential $\varphi_0 \equiv 0$ one has
 $$ P_{top}(f_{1,\infty}, \varphi_0)=h_{top}(f_{1,\infty}).$$
  From the definition of pressure we get for any $\varphi \in C(X)$ the following inequalities
 $$h_{top}(f_{1,\infty})+\inf_{x\in X}\varphi(x) \le P_{top}(f_{1,\infty}, \varphi) \le h_{top}(f_{1,\infty})+\sup_{x\in X}\varphi(x).$$ \color{black}
The topological pressure has the following properties.
\begin{lemma} $If~\varphi \leq\psi \Rightarrow P_{top}(f_{1,\infty}, \varphi)\leq  P_{top}(f_{1,\infty}, \psi),$ for any $\varphi, \psi \in C(X).$
\end{lemma}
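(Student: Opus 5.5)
The monotonicity is inherited step by step from the definition of $P_{top}(f_{1,\infty},\cdot)$. I would compare the two pressures at the level of finite sums and then pass through the supremum, the logarithm, the $\limsup$ and the limit in $\varepsilon$, none of which can reverse an inequality.

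\emph{Step 1 (Birkhoff sums).} Fix $n\in\mathbb N$. Since $\varphi\le\psi$ pointwise on $X$, each summand satisfies $\varphi(f_{k}\circ\cdots\circ f_1(x))\le \psi(f_{k}\circ\cdots\circ f_1(x))$. Summing gives $S_n^{f_{1,\infty}}\varphi(x)\le S_n^{f_{1,\infty}}\psi(x)$ for every $x\in X$.

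\emph{Step 2 (separated sets).} Fix also $\varepsilon>0$. The notion of an $(n,\varepsilon)$-separated set depends only on the metrics $d_n$, not on the potential, so both pressures are built from the same family of sets $E$. Because the exponential is increasing, for each such $E$ we have
$$\sum_{x\in E}e^{S_n^{f_{1,\infty}}\varphi(x)}\le \sum_{x\in E}e^{S_n^{f_{1,\infty}}\psi(x)}\le P_n(f_{1,\infty},\psi,\varepsilon).$$
Taking the supremum over $E$ yields $P_n(f_{1,\infty},\varphi,\varepsilon)\le P_n(f_{1,\infty},\psi,\varepsilon)$.

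\emph{Step 3 (limits).} The maps $t\mapsto \frac1n\log t$ on $(0,\infty]$ are nondecreasing. Here $P_n>0$, since a single point forms a separated set, and $P_n$ is finite by compactness. Monotonicity is preserved by $\limsup_{n\to\infty}$, and then by $\lim_{\varepsilon\to0^+}$. This limit exists in $\mathbb R\cup\{+\infty\}$ because $P_n(f_{1,\infty},\cdot,\varepsilon)$ increases as $\varepsilon$ decreases: any $(n,\varepsilon)$-separated set is also $(n,\varepsilon')$-separated for $\varepsilon'<\varepsilon$.

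There is no genuine obstacle. The only point needing care is the bookkeeping in Step 3: the $\limsup$ and the $\varepsilon$-limit must be taken in the extended reals, so that the inequality also holds when the pressures equal $+\infty$.
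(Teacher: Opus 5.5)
Your proof is correct and matches the paper's argument: compare the Birkhoff sums $S_n^{f_{1,\infty}}$ pointwise, then the sums over each $(n,\varepsilon)$-separated set and hence $P_n(f_{1,\infty},\cdot,\varepsilon)$, and finally pass to $\limsup_{n\to\infty}$ and $\lim_{\varepsilon\to 0^+}$. Your extra remarks (positivity and finiteness of $P_n$, existence of the $\varepsilon$-limit by monotonicity in $\varepsilon$, working in the extended reals) only make explicit bookkeeping that the paper leaves implicit.
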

\begin{proof} Let $n\in \mathbb N$ and $x\in X$. If $\varphi \leq\psi $, then 
\begin{align*}
S_n^{f_{1,\infty}}\varphi(x)&=  \varphi(x)+\varphi(f_1(x))+\varphi(f_2\circ f_1(x))+\cdots +\varphi(f_{n-1}\circ f_{n-2} \circ...\circ f_1(x)).        \\
&\leq \psi(x)+\psi(f_1(x))+\psi(f_2\circ f_1(x))+\cdots +\psi(f_{n-1}\circ f_{n-2} \circ...\circ f_1(x)). \\
&=S_n^{f_{1,\infty}}\psi(x),
\end{align*}
therefore
\begin{align*}
P_n(f_{1,\infty}, \varphi ,\varepsilon)&=\sup _{E}\big \{\sum_{x \in E}e^{S_n^{f_{1,\infty}}\varphi(x)}: E~\text{is}~(n, \varepsilon)-\text{separated}~\text{subset~of}~X\}\\
&\leq \sup _{E}\big \{\sum_{x \in E}e^{S_n^{f_{1,\infty}}\psi(x)}: E~\text{is}~(n, \varepsilon)-\text{separated}~\text{subset~of}~X\}\\
&=P_n(f_{\infty}, \psi ,\varepsilon).
\end{align*}
Passing to the  limsup with $n\rightarrow \infty$ and limit with $\varepsilon \to 0^+$ we get
$$ P_{top}(f_{1,\infty}, \varphi)\leq  P_{top}(f_{1,\infty}, \psi).$$

\end{proof}

\begin{lemma} $ P_{top}(f_{1,\infty}, \varphi +c)= P_{top}(f_{1,\infty}, \varphi)+c,$ for any $\phi \in C(X)$ and $c \in \mathbb R.$
\end{lemma}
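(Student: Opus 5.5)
The plan is to follow the same pattern as the proof of the previous lemma: compute how the Birkhoff-type sum $S_n^{f_{1,\infty}}$ transforms when a constant is added to the potential. Then push this identity through the definition of $P_n$, the normalized logarithm, the $\limsup$ in $n$ and the limit in $\varepsilon$.

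First I will observe that $S_n^{f_{1,\infty}}\varphi(x)$ is a sum of exactly $n$ terms. These are $\varphi$ evaluated at $x, f_1(x), f_2\circ f_1(x), \dots, f_{n-1}\circ\cdots\circ f_1(x)$. Hence for every $x\in X$ and $n\in\mathbb N$,
$$S_n^{f_{1,\infty}}(\varphi+c)(x) = S_n^{f_{1,\infty}}\varphi(x) + nc .$$
Consequently, for every $(n,\varepsilon)$-separated set $E$,
$$\sum_{x\in E} e^{S_n^{f_{1,\infty}}(\varphi+c)(x)} = e^{nc}\sum_{x\in E} e^{S_n^{f_{1,\infty}}\varphi(x)} .$$
The family of admissible sets $E$ does not depend on the potential, and $e^{nc}>0$ is a constant. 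Taking the supremum over $E$ therefore gives $P_n(f_{1,\infty},\varphi+c,\varepsilon)=e^{nc}P_n(f_{1,\infty},\varphi,\varepsilon)$.

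Next I will take $\frac1n\log$ of both sides to obtain
$$\frac1n\log P_n(f_{1,\infty},\varphi+c,\varepsilon)=\frac1n\log P_n(f_{1,\infty},\varphi,\varepsilon)+c .$$
Since $\limsup_{n}(a_n+c)=\limsup_n a_n + c$ for a constant $c$, the same identity holds after the $\limsup$. Letting $\varepsilon\to0^+$ then yields $P_{top}(f_{1,\infty},\varphi+c)=P_{top}(f_{1,\infty},\varphi)+c$.

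The only point needing a little care is that these manipulations are legitimate in the extended reals. The quantity $P_n(f_{1,\infty},\varphi,\varepsilon)$ is positive, because any singleton is separated, so the logarithm is defined. It is also finite, since $X$ is compact and separated sets are finite with bounded cardinality. If the $\limsup$ or the limit were $+\infty$, the identity would still hold trivially as $+\infty=+\infty$. Under the standing assumption $h_{top}(f_{1,\infty})<\infty$, the bounds $h_{top}+\inf\varphi\le P_{top}\le h_{top}+\sup\varphi$ show everything is finite anyway. No real obstacle is expected; this is a direct computation.
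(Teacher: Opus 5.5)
Your proposal is correct and follows essentially the same route as the paper: the identity $S_n^{f_{1,\infty}}(\varphi+c)=S_n^{f_{1,\infty}}\varphi+nc$ gives $P_n(f_{1,\infty},\varphi+c,\varepsilon)=e^{nc}P_n(f_{1,\infty},\varphi,\varepsilon)$, and then one takes $\frac{1}{n}\log$, the $\limsup$ in $n$ and the limit as $\varepsilon\to0^+$. Your remarks on the positivity and finiteness of $P_n$ and on the extended-real case are a sound addition that the paper leaves implicit.
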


\begin{proof} Notice that
\begin{align*}
S_n^{f_{1,\infty}}(\varphi +c)(x)&=\varphi(x)+\varphi(f_1(x))+\varphi(f_2\circ f_1(x))+\cdots +\varphi(f_{n-1}\circ f_{n-2} \circ...\circ f_1(x))+nc\\
&=S_n^{f_{1,\infty}}\varphi(x)+nc.
\end{align*}
Therefore,
\begin{align*}
P_n(f_{1,\infty}, \varphi+c ,\varepsilon)&=\sup _{E}\big \{\sum_{x \in E}e^{S_n^{f_{1,\infty}}(\varphi+c)(x)}: E~\text{is}~(n, \varepsilon)-\text{separated}~\text{subset~of}~X\}\\
&=e^{nc} \sup _{E}\big \{\sum_{x \in E}e^{S_n^{f_{1,\infty}}\varphi(x)}: E~\text{is}~(n, \varepsilon)-\text{separated}~\text{subset~of}~X\}\\
&=e^{nc}P_n(f_{1,\infty}, \varphi ,\varepsilon).
\end{align*}
Taking logarithms of both sides and passing to the limsup with $n\rightarrow \infty$ and limit with  $\varepsilon \to 0^+$ we get
$$ P_{top}(f_{1,\infty}, \varphi +c)=  P_{top}(f_{1,\infty}, \varphi)+c.$$
\end{proof}

\begin{lemma} $ P_{top}(f_{\infty} ,t\varphi+(1-t)\psi)\leq t P_{top}(f_{\infty},\varphi)+(1-t) P_{top}(f_{\infty},\psi),~~\forall t \in [0,1]$ and for any $\varphi,\psi \in C(X).$
\end{lemma}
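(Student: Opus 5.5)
The plan is to prove convexity at the level of the finite-$n$ partition sums, using H\"older's inequality, and then pass to the limits, as in the proofs of the two previous lemmas. First note that $S_n^{f_{1,\infty}}$ is linear in the potential:
$$S_n^{f_{1,\infty}}(t\varphi+(1-t)\psi)(x)=t\,S_n^{f_{1,\infty}}\varphi(x)+(1-t)\,S_n^{f_{1,\infty}}\psi(x).$$
This holds because each term of the Birkhoff-type sum is evaluation of the potential at the point $f_{i}\circ\cdots\circ f_1(x)$, which does not depend on the potential. The cases $t=0$ and $t=1$ are trivial, so assume $t\in(0,1)$.

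Fix $n\in\mathbb N$, $\varepsilon>0$ and an $(n,\varepsilon)$-separated set $E$. Since $X$ is compact, $E$ is finite. Apply H\"older's inequality with exponents $p=1/t$ and $q=1/(1-t)$ to the sequences $a_x=e^{tS_n^{f_{1,\infty}}\varphi(x)}$ and $b_x=e^{(1-t)S_n^{f_{1,\infty}}\psi(x)}$. This gives
$$\sum_{x\in E}e^{S_n^{f_{1,\infty}}(t\varphi+(1-t)\psi)(x)}\le\Big(\sum_{x\in E}e^{S_n^{f_{1,\infty}}\varphi(x)}\Big)^{t}\Big(\sum_{x\in E}e^{S_n^{f_{1,\infty}}\psi(x)}\Big)^{1-t}\le P_n(f_{1,\infty},\varphi,\varepsilon)^{t}\,P_n(f_{1,\infty},\psi,\varepsilon)^{1-t}.$$
The right-hand side does not depend on $E$. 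Taking the supremum over all $E$ yields
$$P_n(f_{1,\infty},t\varphi+(1-t)\psi,\varepsilon)\le P_n(f_{1,\infty},\varphi,\varepsilon)^{t}\,P_n(f_{1,\infty},\psi,\varepsilon)^{1-t}.$$

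Next take $\frac1n\log$ of both sides, which turns the product into
$$\frac tn\log P_n(f_{1,\infty},\varphi,\varepsilon)+\frac{1-t}n\log P_n(f_{1,\infty},\psi,\varepsilon).$$
Then take $\limsup_{n\to\infty}$, using subadditivity of $\limsup$ (with nonnegative coefficients $t$ and $1-t$):
$$\limsup_n\big(t a_n+(1-t)b_n\big)\le t\limsup_n a_n+(1-t)\limsup_n b_n.$$
Finally let $\varepsilon\to0^+$. The limits exist because each $\limsup$ term is monotone in $\varepsilon$: shrinking $\varepsilon$ enlarges the class of separated sets. This gives the claimed convexity inequality.

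The only step needing care is this last limit passage. If $h_{top}(f_{1,\infty})=+\infty$, some of the quantities may equal $+\infty$. Since $t,1-t\ge0$, the inequality still holds in the extended sense, and the right-hand side is never of the form $\infty-\infty$ because all quantities are bounded below by $h_{top}(f_{1,\infty})+\inf\varphi$ or the analogous bound for $\psi$. In the setting of Theorem A, where $h_{top}(f_{1,\infty})<\infty$, all quantities are finite and the argument is routine.
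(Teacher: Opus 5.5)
Your proof is correct and follows the paper's argument. Both use linearity of $S_n^{f_{1,\infty}}$ in the potential, apply H\"older's inequality with $p=1/t$, $q=1/(1-t)$ on each $(n,\varepsilon)$-separated set to get $P_n(f_{1,\infty},t\varphi+(1-t)\psi,\varepsilon)\le P_n(f_{1,\infty},\varphi,\varepsilon)^t P_n(f_{1,\infty},\psi,\varepsilon)^{1-t}$, and then pass to logarithms, $\limsup_{n}$ and $\varepsilon\to0^+$. You also spell out details the paper leaves implicit: the endpoint cases $t\in\{0,1\}$, subadditivity of $\limsup$, monotonicity in $\varepsilon$, and lower bounds ruling out $\infty-\infty$.
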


\begin{proof} Choose $t \in [0,1],$ then
\begin{align*}
S_n^{f_{1,\infty}}(t\varphi +(1-t)\psi)(x)&=(t\varphi +(1-t)\psi)(x)+\cdots +(t\varphi +(1-t)\psi)(f_{n-1}\circ f_{n-2} \circ...\circ f_1(x))\\
&=tS_n^{f_{1,\infty}}\varphi(x)+(1-t)S_n^{f_{1,\infty}}\psi(x).
\end{align*}
By the Holder's inequality (with $p=\frac{1}{t}$ and $q=\frac{1}{1-t}$) we get
$$\sum_{x \in E}e^{S_n^{f_{1,\infty}}(t\varphi+(1-t)\psi)(x)}\le (\sum_{x \in E}e^{S_n^{f_{1,\infty}}(\varphi(x))})^{t} \cdot (\sum_{x \in E}e^{S_n^{f_{1,\infty}}(\psi(x))})^{1-t},$$
where $E$ is an $(n,\varepsilon)-$ separated subset of $X.$ Therefore

$$P_n(f_{1,\infty}, t\varphi+(1-t)\psi ,\varepsilon)\\
\le P_n(f_{\infty}, \varphi ,\varepsilon)^t \cdot P_n(f_{\infty}, \psi ,\varepsilon)^{1-t}.$$
Taking logarithms of both sides and passing to limsup with $n\rightarrow \infty$ and limit with $\varepsilon \to 0^+$ we obtain
$$P_{top}(f_{1,\infty} ,t\varphi+(1-t)\psi)\leq t P_{top}(f_{1,\infty},\varphi)+(1-t) P_{top}(f_{1,\infty},\psi).$$
\end{proof}
As a  consequence  \color{black} we get the following    property.  \color{black}
\begin{corollary} Given a NADDS determined by  $f_{1,\infty}= \{ f_n:X \to X \}^{\infty}_{n=1}$  with \\ $h_{top}(f_{1,\infty}) < +\infty,$ then $\Ptop(f_{1,\infty}, \cdot)$ is a pressure function.
\end{corollary}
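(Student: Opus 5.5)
The corollary asks us to check that $\Ptop(f_{1,\infty},\cdot)\colon C(X)\to\mathbb{R}$ satisfies the three conditions (C$_1$)--(C$_3$) of Definition \ref{def:pressure-function}, with $\mathbf B = C(X)$. Since $X$ is compact, $C(X)=C_c(X)$. The one extra ingredient is that $\Ptop(f_{1,\infty},\cdot)$ must be real-valued, i.e.\ finite at every potential; this is where the hypothesis $h_{top}(f_{1,\infty})<+\infty$ enters.

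\emph{Finiteness.} We use the inequalities recorded right after the definition of pressure:
$$h_{top}(f_{1,\infty})+\inf_X\varphi \le \Ptop(f_{1,\infty},\varphi)\le h_{top}(f_{1,\infty})+\sup_X\varphi .$$
A continuous $\varphi$ on compact $X$ is bounded. Also $h_{top}(f_{1,\infty})\ge 0$, because a single point is $(n,\varepsilon)$-separated, so $s(n,\varepsilon,f_{1,\infty})\ge 1$. Combined with $h_{top}(f_{1,\infty})<+\infty$, both bounds are finite real numbers, and hence $\Ptop(f_{1,\infty},\varphi)\in\mathbb{R}$ for every $\varphi$.

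If one wants to justify the displayed inequalities rather than quote them, the argument is short. For every $x$ we have $n\inf\varphi\le S_n^{f_{1,\infty}}\varphi(x)\le n\sup\varphi$. This gives
$$e^{n\inf\varphi}\,s(n,\varepsilon,f_{1,\infty})\le P_n(f_{1,\infty},\varphi,\varepsilon)\le e^{n\sup\varphi}\,s(n,\varepsilon,f_{1,\infty}).$$
Taking $\frac1n\log$, then $\limsup_n$, then $\lim_{\varepsilon\to0^+}$ yields the bounds. Alternatively, the bounds follow from the monotonicity and translation lemmas applied to the constants $\inf\varphi$ and $\sup\varphi$, together with $\Ptop(f_{1,\infty},0)=h_{top}(f_{1,\infty})$.

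\emph{The three axioms.} Each condition is one of the lemmas just proved:
\begin{itemize}
\item (C$_1$) is the monotonicity lemma;
\item (C$_2$) is the translation lemma;
\item (C$_3$) is the convexity lemma, obtained via H\"older's inequality.
\end{itemize}
One point needs care. These lemmas manipulate limits in the extended reals, for example subtracting quantities when passing to $\limsup$. Now that finiteness is established, all quantities involved are real, so those manipulations are legitimate. In the convexity lemma, the step $\limsup(a_n+b_n)\le\limsup a_n+\limsup b_n$ followed by $\lim_{\varepsilon\to0^+}$ needs no cancellation of infinities. The endpoint cases $t=0,1$ are trivial.

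\emph{Main obstacle.} The only delicate point is finiteness, which rests on the bound above and on $h_{top}(f_{1,\infty})<+\infty$. Once that is in hand, the corollary is immediate from the three lemmas.
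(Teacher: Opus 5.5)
Your proposal is correct and follows the paper's route. The corollary comes from combining the monotonicity, translation and convexity lemmas, with real-valuedness supplied by the bounds $h_{top}(f_{1,\infty})+\inf_X\varphi\le\Ptop(f_{1,\infty},\varphi)\le h_{top}(f_{1,\infty})+\sup_X\varphi$ that the paper records just before those lemmas; your explicit use of $h_{top}(f_{1,\infty})<+\infty$ only spells out a step the paper leaves implicit.
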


\bigskip
\subsection{Proof of Theorem A} 
\begin{proof}Since $ P_{top}(f_{1,\infty}, \cdot) : C(X) \to \mathbb{R}$ is a pressure function
the unifying statement of Theorem~\ref{thm:main-lemma} provides an abstract variational principle 
for a nonautonomous discrete dynamical system determined by $f_{1,\infty}= \{ f_n:X \to X \}^{\infty}_{n=1}$  and the topological pressure $P_{top}(f_{1,\infty}, \varphi),$
with respect to $\varphi \in C(X).$
\end{proof}

\subsection{Proof of Corollary 1}

For a pressure function $\Gamma: C(X) \to \mathbb{R}$ and potential $\phi \in C(X)$ we say  that $\mu \in P(X)$ is a {\it tangent functional }to $\Gamma$ at $\phi$ if for any $\psi \in C(X)$ the inequality
$\Gamma(\phi + \psi)-\Gamma(\phi) \ge \int\psi d\mu$ holds. Let $\mathcal{T}_{\phi}(\Gamma)$ be the space of tangent functionals to $\Gamma$ at $\phi$ and let
 $$\mathcal{E}_{\phi}(\Gamma):=\{\mu \in P(X): \Gamma(\phi)={\mathfrak h}(\mu) +\int \psi d\mu\}.$$
Let us recall   the  definition of the Gateaux differential (or Gateaux derivative), which  is a generalization of the concept of directional derivative in differential calculus.
Let $F$ be a function on an open set $U$ of a Banach space $X$ into the Banach space $Y,$ {\it the Gateaux differential} $dF(u;\psi )$ of $F$  at $u \in U$  in the direction $\psi \in X$ is defined as
$$dF(u;\psi )=\lim_{t\to 0} \frac{F(u +t\cdot \psi)-F(u)}{t}.$$
If the limit exists for all $\psi \in X,$ then one says that $F$ is Gateaux differentiable at $u.$
\bigskip

Now we can provide the proof of Corollary 1.

\begin{proof} Fix a  nonautonomous  discrete dynamical system $f_{1,\infty}= \{ f_n: X \to X \}^{\infty}_{n=1},$ with finite topological entropy  $h_{top}(f_{1,\infty}),$ and a continuous potential $\varphi \in C(X).$
From Theorem A it follows that for the pressure function $P_{top}(f_{1,\infty}, \cdot): C(X) \to \mathbb{R}$ and a continuous potential $\varphi \in C(X)$ there exists a  measure $\mu \in P(X)$ such that $\Ptop(f_{1,\infty}, \,\varphi) =  {\mathfrak h}_{f_{1,\infty}}(\mu) + \int_X  \varphi \, d\mu.$ It remains to show that this measure $\mu$ is uniquely determined.
Since the function $ P_{top}(f_{1,\infty}, \cdot): C(X) \to \mathbb{R}$ is convex, for any $\phi, \psi \in C(X)$ we obtain that the function
$$ t  \to \frac{P_{top}(f_{1,\infty}, \phi+t\cdot  \psi)-P_{top}(f_{1,\infty}, \phi)}{t}$$
is increasing and therefore there exist limits:
$$d^+P_{top}(f_{1,\infty}, \phi)(\psi):=\lim_{t \searrow 0^+} \frac{P_{top}(f_{1,\infty}, \phi+t\cdot  \psi)-P_{top}(f_{1,\infty}, \phi)}{t},$$
$$d^-P_{top}(f_{1,\infty}, \phi)(\psi):=\lim_{t \nearrow 0^-} \frac{P_{top}(f_{1,\infty}, \phi+t\cdot  \psi)-P_{top}(f_{1,\infty}, \phi)}{t},$$
hence the pressure function $P_{top}(f_{1,\infty}, \cdot)$ is Gateaux differentialble at $\phi$.

From Theorem 2 in \cite{BCMV} it follows that  for the pressure function $\Gamma(\cdot)=P_{top}(f_{1,\infty}, \cdot)$ we get equality $\mathcal{E}_{\phi}(\Gamma) =\mathcal{T}_{\phi}(\Gamma),$ for any 
$\phi \in C(X).$ Using the Gateaux differentiability of the pressure function $P_{top}(f_{1,\infty}, \cdot)$ and the Corollary 4 in \cite{BCMV} we get that for $\phi$ there exists a 
unique tangent functional in $\mathcal{T}_{\phi},$ that completes the proof.
\end{proof}

\bigskip

Analyzing the proof of Corollary 1,  it can be seen that for a  nonautonomous  discrete dynamical system $f_{1,\infty}= \{ f_n: X \to X \}^{\infty}_{n=1},$ defined on a compact metric space $(X,d),$ 
an abstract pressure function $\Gamma_{f_{1,\infty}}: C(X) \to \mathbb{R}$ 
with $\Gamma_{f_{1,\infty}}(0)< \infty$ and a potential $\varphi \in C(X)$ there exists a unique measure $\mu$ on $X$
such that $\Gamma_{f_{1,\infty}}( \varphi) =  {\mathfrak h}_{f_{1,\infty}}(\mu) + \int_X  \varphi \, d\mu$ if and only if $\Gamma_{f_{1,\infty}}: C(X) \to \mathbb{R}$ is Gateaux differentiable at $\varphi.$

\bigskip

In general case for a nonautonomous dynamical system $f_{1,\infty}= \{ f_n:X \to X \}^{\infty}_{n=1}$, there is no common  $f_n-$invariant Borel probability measure, for any 
$n \in \mathbb N$ (see Example 4.6).

\begin{example} 
Consider $f_{1,\infty}= \{ f_n:S^1\to S^1 \}^{\infty}_{n=1}.$ Assume  that $f_i,$ where $i=1,2,$ are circle diffeomorphisms, such that each of them has two fixed points:
a source $p_i$ and a sink $q_i$ which split $S^1$ into two arcs $A_i$ and $B_i$ 
each of them being contracted to $q_i.$ Additionally, we assume that $\{p_1,q_1\} \cap \{p_2,q_2\}=\emptyset.$
By Proposition 3.1.6 in \cite{PWal} any measure $\mu$ that is $f_i-$invariant
has to be supported in the set $\{p_i,q_i\}.$ Therefore $f_1$ and $f_2$ do not have a common invariant measure, which consequently gives that $\mathcal{P}_{f_{1,\infty}}(X)= \emptyset.$

\end{example}

However, there are many special cases of nonautonomous dynamical systems for which invariant measures exist (see Example 2.3 and Example 2.4). \color{black}

\bigskip

\subsection{Proof of Theorem B}
Theorem B generalizes the analogous property established in \cite{BCMV} for a single dynamics.
Theorem B result relates the restriction to $\mathcal{P}_{f_{1,\infty}}(X)$ of the variational metric entropy ${\mathfrak h}_{f_{1,\infty}}$ with the star-entropy of NADDS, defined by
$$\mu\,\in\, \mathcal P_{f_{1,\infty}}(X) \quad \mapsto \quad h^*_{f_{1,\infty}}(\mu) \,:=\, \sup \Big\{\limsup_{n \, \to  \,+\infty}\,h_{f_{1,\infty}}(\mu_n) \,\,|\,\,\mu_n \in \mathcal{P}_{f_{1,\infty}}(X) \text{ and } \lim_{n \, \to \, +\infty}  \mu_n = \mu\Big\},$$
where the convergence of $(\mu_n)_{n \, \in \mathbb{N}}$ takes place in the weak$^*$-topology. 
\bigskip
Now we can present a proof of Theorem B.
\begin{proof}
Let  $f_{1,\infty}= \{ f_n: X \to X \}^{\infty}_{n=1}$ be a nonautonomous discrete dynamical system, defined on a compact metric space $X,$ and let  $\Ptop(f_{1,\infty}, \,\cdot)$ be the topological pressure function.

\noindent (a) As $\Ptop(f_{1,\infty}, \,\cdot)$ satisfies the variational principle (2.5)
 with respect to a common $f_{1,\infty}$-invariant Borel probability measures $\mu$ on $X$,
it is easy to conclude that, for every $\varphi, \, \psi \in C(X)$ and every $f_n$, we have
\begin{equation}\label{eq:property}
\Ptop(f_{1,\infty}, \,\varphi + \psi \circ f_n- \psi) \,=\, \Ptop(f_{1,\infty},\, \varphi) \,=\,\Ptop(f_{1,\infty}, \varphi + \psi - \psi \circ f_n).
\end{equation}
Indeed, by the variational principle we get
$$\Ptop(f_{1,\infty}, \,\varphi + \psi \circ f_n - \psi) = \sup_{\mu \, \in \, \mathcal{P}_{f_{1,\infty}}(X)} \left\{h_{f_{1,\infty}}(\mu) + \int (\varphi + \psi \circ f_n - \psi) \, d\mu \right\} =$$
$$ \sup_{\mu \, \in \, \mathcal{P}_{f_{1,\infty}}(X)} \left\{h_{f_{1,\infty}}(\mu) + \int \varphi \, d\mu \right\}=\Ptop(f_{1,\infty},\, \varphi),$$
where the second equality is due to the assumption that the measure $\mu$ is $f_n-$invariant
and similarly with regard to the second equality in \eqref{eq:property}.

\smallskip

Now we claim that every probability measure attaining the maximum in 
(2.1) is $f_{1,\infty}$-invariant. Consider $\psi \in C(X)$, $f_n:X \to X$ and fix $\mu_\varphi, \mu_1, \, \mu_2 \in \mathcal{P}(X)$ provided by
(2.1) such that
\begin{eqnarray*}
\Ptop(f_{1,\infty},\varphi) &=& {\mathfrak h}_{f_{1,\infty}}(\mu_\varphi) + \int \varphi \, d\mu_\varphi\\
\Ptop(f_{1,\infty},\varphi + \psi \circ f_n - \psi) &=& {\mathfrak h}_{f_{1,\infty}}(\mu_1) + \int \varphi\, d\mu_1 + \int (\psi\circ f_n) \, d\mu_1 - \int \psi \, d\mu_1 \\
\Ptop(f_{1,\infty},\varphi + \psi - \psi \circ f_n) &=& {\mathfrak h}_{f_{1,\infty}}(\mu_2) + \int \varphi \, d\mu_2 + \int \psi \, d\mu_2 - \int (\psi\circ f_n) \, d\mu_2.
\end{eqnarray*}
The first two equalities, property \eqref{eq:property} and the variational principle
(2.1) now yield
\begin{eqnarray*}
{\mathfrak h}_{f_{1,\infty}}(\mu_\varphi) + \int \varphi \, d\mu_\varphi &=& {\mathfrak h}_{f_{1,\infty}}(\mu_1) + \int \varphi \, d\mu_1 + \int (\psi\circ f_n) \, d\mu_1 - \int \psi \, d\mu_1  \\
&\geqslant&  {\mathfrak h}_{f_{1,\infty}}(\mu_\varphi) + \int \varphi \, d\mu_\varphi + \int (\psi \circ f_n) \, d\mu_\varphi - \int \psi \, d\mu_\varphi
\end{eqnarray*}
and so $\int (\psi \circ f_n) \, d\mu_\varphi - \int \psi \, d\mu_\varphi \,\leqslant\, 0.$ In a similar way, we deduce that
\begin{eqnarray*}
{\mathfrak h}_{f_{1,\infty}}(\mu_\varphi) + \int \varphi \, d\mu_\varphi &=& {\mathfrak h}_{f_{1,\infty}}(\mu_2) + \int \varphi \, d\mu_2 + \int \psi \, d\mu_2  - \int (\psi\circ f_n) \, d\mu_2 \\
&\geqslant&  {\mathfrak h}_{f_{1,\infty}}(\mu_\varphi) + \int \varphi \, d\mu_\varphi + \int \psi \, d\mu_\varphi - \int (\psi \circ f_n) \, d\mu_\varphi
\end{eqnarray*}
so $\int \psi \, d\mu_\varphi - \int (\psi \circ f_n) \, d\mu_\varphi \,\leqslant\, 0.$ Therefore, for any $n \in \mathbb N$ the equality $\int \psi \, d\mu_\varphi=\int (\psi \circ f_n) \, d\mu_\varphi$ holds, 
it means that the measure
$\mu_\varphi$ is $f_n-$invariant, for any $n \in \mathbb N.$ Thus, $\mu_\varphi$ is $f_{1,\infty}$-invariant.

\medskip

\noindent (b) The proof of this assertion is  similar  to the ones done in \cite[page 222]{Wa} for a single dynamics. Indeed, from the assumptions that $h_{\mathrm{top}}(f_{1,\infty}) < +\infty$ and that the topological pressure satisfies a variational principle with the non-negative metric entropy map $h_{f_{1,\infty}}$ we conclude the following. 
 We claim that for given $\mu \in \mathcal{P}(X)$,
$$\mu \in \mathcal{P}_{f_{1,\infty}}(X) \quad \Leftrightarrow \quad \int \varphi \, d\mu \,\leqslant\, \Ptop(f_{1,\infty},\varphi) \quad \forall\, \varphi \in C(X).$$
Indeed, for $\mu \in \mathcal{P}_{f_{1,\infty}}(X)$ the variational principle yields that the inequality $ \int \varphi \, d\mu \,\leqslant\, \Ptop(f_{1,\infty},\varphi)$ holds for any $\varphi \in C(X).$

Now suppose that $\mu$ is a finite signed measure such that for any $\varphi \in C(X)$  we have $\int \varphi \, d\mu \,\leqslant\, \Ptop(f_{1,\infty},\varphi).$
In the first step we show that $\mu$ takes only non-negative values. To this aim suppose that $\varphi \ge 0, $~$\epsilon>0$ and $n \in \mathbb{N}.$ Then
$$\int n\cdot(\varphi+\epsilon)d \mu=- \int -n\cdot(\varphi+\epsilon)d \mu \ge -\Ptop(f_{1,\infty},-n\cdot(\varphi+\epsilon)) \ge -[h_{top}(f_{1,\infty})  +\sup_{x \in X}(-n\cdot(\varphi+\epsilon) )]=$$
$$-h_{top}(f_{1,\infty}) +n \cdot \inf_{x \in X}(\varphi +\epsilon)>0\quad \text{for~large~n}.$$
Therefore $\int (\varphi +\epsilon) d\mu >0.$ Since $\epsilon$ is arbitrary small we get $\int \varphi  d\mu \ge 0$ and $\mu$ is a measure.

In the next step we show that $\mu$ is a probability measure. Take $n \in \mathbb{Z},$ then 
$$\int n d \mu \le  \Ptop(f_{1,\infty},n) =h_{top}(f_{1,\infty}) +n.$$
So for  $n \in \mathbb{N}$ we get $\mu(X)=\int 1 d\mu\le 1 + \frac{h_{top}(f_{1,\infty})}{n}$ and hence $\mu(X)\le 1.$ On the other hand, for negative $n$ we get
$\mu(X)=\int 1 d\mu\ge 1 + \frac{h_{top}(f_{1,\infty})}{n}$ which yields $\mu(X)\ge 1.$ Therefore we have proved that $\mu(X)= 1.$

In the next step we show that $\mu \in \mathcal{P}_{f_{1,\infty}}(X).$ For any $m \in \mathbb{Z,}$~ arbitrary $\varphi \in C(X)$ and fixed $n \in \mathbb{N}$ due to (4.7) we get
$$m\int (\varphi \circ f_n-\varphi)d \mu \le \Ptop(f_{1,\infty},m\cdot(\varphi \circ f_n-\varphi)) =\Ptop(f_{1,\infty}, 0)=h_{top}(f_{1,\infty}). $$
If $m>1$ then dividing by $m$ and letting $m$ go to $\infty$ we get $\int (\varphi \circ f_n-\varphi)d \mu\le 0.$ If $m<-1$ then dividing by $m$ and letting $m$ go to $-\infty$ we get $\int (\varphi \circ f_n-\varphi)d \mu\ge 0.$ Therefore for any potential $\varphi \in C(X)$ and arbitrary $n \in \mathbb{N}$ we have $\int \varphi \circ f_n d \mu = \int \varphi d\mu,$ so $\mu \in \mathcal{P}_{f_{1,\infty}}(X).$
The claim has been proved.

\color{black}
Therefore, by formula 
(2.2) we get
$$\mu \in \mathcal{P}_{f_{1,\infty}}(X) \quad \Leftrightarrow \quad {\mathfrak h}_{f_{1,\infty}}(\mu) \,\geqslant\, 0.$$
Thus we have proved that the values of the map ${\mathfrak h}_{f_{1,\infty}}$ determine the elements of the set $\mathcal{P}_{f_{1,\infty}}(X)$.
\medskip

\noindent (c) By definition, for every $\mu \in \mathcal{P}_{f_{1,\infty}}(X)$ one has $0 \le h_{f_{1,\infty}}(\mu) \leqslant h^*_{f_{1,\infty}}(\mu)$. Moreover, by the variational principle (2.5),
 for each $\mu \in \mathcal{P}_{f_{1,\infty}}(X)$
$$h_{f_{1,\infty}}(\mu) \,\,\leqslant \,\,\Ptop(f_{1,\infty}, \varphi) - \int \varphi \, d\mu  \quad \quad \forall \,\varphi \, \in \, C(X)$$
and therefore
\begin{equation}\label{ineq}
{\mathfrak h}_{f_{1,\infty}}(\mu) = \inf_{\varphi \, \in \, C(X)}\, \left\{\Ptop(f_{1,\infty}, \varphi) - \int \varphi \, d\mu\right\} \,\,\geqslant \,\, h_{f_{1,\infty}}(\mu).
\end{equation}
Consequently, if $h^*_{f_{1,\infty}}(\mu)  >  {\mathfrak h}_{f_{1,\infty}}(\mu)$ for some $\mu \in \mathcal{P}_{f_{1,\infty}}(X)$, then there would exist $\nu \in \mathcal{P}_{f_{1,\infty}}(X)$ satisfying $h_{f_{1,\infty}}(\nu) > {\mathfrak h}_{f_{1,\infty}}(\nu)$, contradicting ~\eqref{ineq}. This proves that
\begin{equation}\label{eq:inequalities}
0\leqslant h_{f_{1,\infty}}(\mu) \,\leqslant\, h^*_{f_{1,\infty}}(\mu) \,\leqslant\,  {\mathfrak h}_{f_{1,\infty}}(\mu) \quad \quad \forall\, \mu \in \mathcal{P}_{f_{1,\infty}}(X).
\end{equation}

\medskip

\noindent (d) Observe now that the inequalities \eqref{eq:inequalities} imply that
\begin{align*}
\sup_{\mu \,\in \,\mathcal{P}_{f_{1,\infty}}(X)}\,\left\{h_{f_{1,\infty}}(\mu) + \int \varphi \, d\mu\right\}\,\,& \leqslant \,\,\max_{\mu \,\in \,\mathcal{P}_{f_{1,\infty}}(X)}\,\left\{h^*_{f_{1,\infty}}(\mu) + \int \varphi \, d\mu\right\} \\
& \leqslant \,\, \max_{\mu \,\in \,\mathcal{P}_{f_{1,\infty}}(X)}\,\left\{{\mathfrak h}_{f_{1,\infty}}(\mu) + \int \varphi \, d\mu\right\}.
\end{align*}
On the other hand, as $\mathcal{P}_{f_{1,\infty}}(X) \, \subset \, \mathcal{P}(X)$ and ${\mathfrak h}_{f_{1,\infty}}$ is upper semi-continuous, for every $\varphi  \in  C(X)$ we have
$$\max_{\mu \,\in \,\mathcal{P}_{f_{1,\infty}}(X)}\,\left\{{\mathfrak h}_{f_{1,\infty}}(\mu) + \int \varphi \, d\mu \right\} \,\,\leqslant \,\, \max_{\mu \,\in \,\mathcal{P}(X)}\,\left\{{\mathfrak h}_{f_{1,\infty}}(\mu) + \int \varphi \, d\mu\right\}.$$
Therefore, from the variational principles (2.1)
and (2.5)
we obtain
\begin{eqnarray*}
\max_{\mu \,\in \,\mathcal{P}_{f_{1,\infty}}(X)}\,\left\{{\mathfrak h}_{f_{1,\infty}}(\mu) + \int \varphi \, d\mu\right\} \, &\leqslant& \, \max_{\mu \,\in \,\mathcal{P}(X)}\,\left\{{\mathfrak h}_{f_{1,\infty}}(\mu) + \int \varphi \, d\mu\right\} \\
\,&=&\, \Ptop(f_{1,\infty}, \varphi) \\
&=& \sup_{\mu \,\in \,\mathcal{P}_{f_{1,\infty}}(X)}\,\left\{h_{f_{1,\infty}}(\mu) + \int \varphi \, d\mu\right\}\\
&\leqslant&  \,\max_{\mu \,\in \,\mathcal{P}_{f_{1,\infty}}(X)}\,\left\{h^*_{f_{1,\infty}}(\mu) + \int \varphi \, d\mu\right\} \\
&\leqslant& \, \max_{\mu \,\in \,\mathcal{P}_{f_{1,\infty}}(X)}\,\left\{{\mathfrak h}_{f_{1,\infty}}(\mu) + \int \varphi \, d\mu\right\}.
\end{eqnarray*}
Consequently,
$$\Ptop(f_{1,\infty}, \varphi) \,=\, \max_{\mu \,\in \,\mathcal{P}_{f_{1,\infty}}(X)}\,\left\{{\mathfrak h}_{f_{1,\infty}}(\mu) + \int \varphi \, d\mu\right\} \,=\, \max_{\mu \,\in \,\mathcal{P}_{f_{1,\infty}}(X)}\,\left\{h^*_{f_{1,\infty}}(\mu) + \int \varphi \, d\mu\right\}.$$
The proof is complete.
\end{proof}


\section{Misiurewicz entropy and Misiurewicz pressure  for NADDS}
\subsection{Basic definitions} 

In 1976, Misiurewicz  \cite{Mis} defined a topological pressure and a measure-theoretic entropy  for a $\ZZ^n_{+}$ action on a compact space.
His celebrated result says that the variational principle holds for a $\ZZ^n_{+}$ action on a compact space. Our definition of Misiurewicz pressure of $f_{1,\infty}$ was 
inspired by his definition  of the topological pressure   for a $\ZZ^n_{+}$ action \cite{Mis}.

In this section we consider a NADDS $f_{1,\infty}:=\{f_n: X \to X\}_{n \in \mathbb N}$ defined on a compact space $X.$ Let $\mathcal{W}$ be the set of all neighborhoods
of the diagonal $\{(x,x):x \in X\} \subset X \times X$ directed by the inclusion. It is known ( \cite{Ke}) that this is uniform structure for $X.$ 

For any $n \in \mathbb{N},$ neighborhood $\delta \in \mathcal{W},$ potential $\phi \in C(X)$ and any finite subset $E \subset X$ we define:
$$\delta_n:=\bigcap_{i=1}^n( f_i\circ...\circ f_1  \times  f_i\circ...\circ f_1)^{-1} \delta,$$
$$\phi_n(x):=\sum_{i=1}^n \phi \circ  f_i\circ...\circ f_1(x),$$
$$p(\phi_n,E):=\log \sum_{x \in E} exp[\sum_{i=1}^n \phi(f_i \circ ...\circ f_1(x))].$$
It is clear that  $\delta_n \in \mathcal{W}$ and $\phi_n \in C(X).$ A finite set $E \subset X$ is called:

1) $(n,\delta)_M-${\it separated} if for any distinct points $x,y \in E$ we have $(x,y)\notin \delta_n,$

2) $(n,\delta)_M-${\it spanning} if for any $x \in X$ there exists $y \in E$ such that $(x,y) \in \delta_n.$

Let
$$P_{n,\delta}(f_{1,\infty},\phi):=\sup \{p(\phi_n, E): ~E \subset X~and~E~is~(n,\delta)_M-separated\},$$

 $$P_{\delta}(f_{1,\infty},\phi):=\limsup_{n \to \infty} \frac{1}{n} P_{n,\delta}(f_{1,\infty},\phi).$$
\begin{definition}\rm{The quantity
$P_{Mis}(f_{1,\infty},\phi):=\sup_{\delta \in  \mathcal{W}} P_{\delta}(f_{1,\infty},\phi)$
 is called the {\it  Misiurewicz pressure  } of  $f_{1,\infty}=\{f_n: X \to X\}_{n \in \mathbb{N}},$ 
with respect to a potential $\varphi \in C(X)$. Taking the potential $\varphi_0 \equiv 0$ we can define 
Misiurewicz entropy of $f_{1,\infty}$  as
$$h_{Mis}(f_{1,\infty}):=P_{Mis}(f_{1,\infty},\varphi_0).$$}
\end{definition} 

\subsection{Pressure function} 
\begin{proposition}\rm{ The map $\varphi   \longmapsto P_{Mis}(f_{1,\infty}, \varphi)$ is a  pressure function.}
\end{proposition}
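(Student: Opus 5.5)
The plan is to mirror the three lemmas proved for $\Ptop(f_{1,\infty},\cdot)$ in Section 4 and verify (C$_1$)--(C$_3$) of Definition \ref{def:pressure-function} at the level of the finite quantities $p(\phi_n,E)$. Then we pass to the supremum over $(n,\delta)_M$-separated sets, the $\limsup$ in $n$, and the supremum over $\delta\in\mathcal W$. Implicitly we also need $P_{Mis}(f_{1,\infty},\varphi)\in\mathbb{R}$. This follows as in the topological case from
$$h_{Mis}(f_{1,\infty})+\inf_X\varphi\le P_{Mis}(f_{1,\infty},\varphi)\le h_{Mis}(f_{1,\infty})+\sup_X\varphi,$$
under the standing assumption $h_{Mis}(f_{1,\infty})<+\infty$ (as in Theorem C). The displayed inequality is itself a consequence of (C$_1$) and (C$_2$) applied to the constants $\inf\varphi$ and $\sup\varphi$.

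\emph{Monotonicity.} If $\varphi\le\psi$, then $\varphi_n(x)=\sum_{i=1}^n\varphi(f_i\circ\cdots\circ f_1(x))\le\psi_n(x)$ pointwise. Hence $p(\varphi_n,E)\le p(\psi_n,E)$ for every finite $E$, and in particular for every $(n,\delta)_M$-separated $E$. Taking the supremum over $E$ gives $P_{n,\delta}(f_{1,\infty},\varphi)\le P_{n,\delta}(f_{1,\infty},\psi)$. Dividing by $n$, taking $\limsup_n$ and then $\sup_\delta$ preserves the inequality.

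\emph{Translation invariance.} Here $(\varphi+c)_n=\varphi_n+nc$, so $p((\varphi+c)_n,E)=p(\varphi_n,E)+nc$ exactly. Hence $P_{n,\delta}(f_{1,\infty},\varphi+c)=P_{n,\delta}(f_{1,\infty},\varphi)+nc$, and $\frac1n P_{n,\delta}$ shifts by exactly $c$. Adding a constant commutes with $\limsup_n$ and with $\sup_\delta$, which gives (C$_2$).

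\emph{Convexity.} For $t\in(0,1)$ (the endpoints are trivial) we have $(t\varphi+(1-t)\psi)_n=t\varphi_n+(1-t)\psi_n$. H\"older's inequality with exponents $1/t$ and $1/(1-t)$, exactly as in the proof of the convexity lemma for $\Ptop$, gives for each finite $E$
$$p\big((t\varphi+(1-t)\psi)_n,E\big)\le t\,p(\varphi_n,E)+(1-t)\,p(\psi_n,E).$$
Taking the supremum over separated $E$ bounds the left side by $tP_{n,\delta}(f_{1,\infty},\varphi)+(1-t)P_{n,\delta}(f_{1,\infty},\psi)$. Then we use $\limsup(a_n+b_n)\le\limsup a_n+\limsup b_n$ to get
$$P_\delta(f_{1,\infty},t\varphi+(1-t)\psi)\le tP_\delta(f_{1,\infty},\varphi)+(1-t)P_\delta(f_{1,\infty},\psi).$$

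The main obstacle is the last passage, $\sup_\delta$. The supremum of a sum is only bounded by the sum of suprema, and that bound goes in the right direction: $\sup_\delta\big(tP_\delta(\varphi)+(1-t)P_\delta(\psi)\big)\le t\sup_\delta P_\delta(\varphi)+(1-t)\sup_\delta P_\delta(\psi)$. So convexity survives as well.

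The genuinely delicate point is therefore not the inequalities but making sure that $\sup_\delta$ is a monotone limit. If $\delta'\subset\delta$, then $\delta'_n\subset\delta_n$, so every $(n,\delta)_M$-separated set is $(n,\delta')_M$-separated. Hence $P_\delta$ is nonincreasing in $\delta$ along the directed set $\mathcal W$, and $\sup_\delta$ is a limit along a net. This justifies treating it like the limit $\varepsilon\to0^+$ in the topological case, and it also guarantees finiteness once $h_{Mis}(f_{1,\infty})<\infty$.
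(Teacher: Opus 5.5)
Your proposal is correct and follows the paper's route. Like the paper's three lemmas, you verify (C$_1$)--(C$_3$) at the level of $p(\phi_n,E)$, using H\"older for convexity, and then pass through $\sup_E$, $\limsup_n$ and $\sup_\delta$; your observation that real-valuedness needs $h_{Mis}(f_{1,\infty})<+\infty$ is a point the paper leaves implicit, whereas the monotonicity of $P_\delta$ in $\delta$ is true but not needed, since each inequality already survives $\sup_\delta$ directly.
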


\begin{proof}
The proof is based on the three lemmas below.
\end{proof} 
 
\begin{lemma}\rm{If $\varphi,\psi \in C(X)$ and $\varphi \leq\psi$, then $P_{Mis}(f_{1,\infty},\varphi)\leq  P_{Mis}(f_{1,\infty}, \psi).$}
\end{lemma}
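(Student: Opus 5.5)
The plan is to copy the proof of the monotonicity lemma for $P_{top}$: compare the two pressures at each finite stage and then pass to the limits. Everything reduces to a pointwise inequality between Birkhoff-type sums. Fix $n\in\mathbb N$ and $x\in X$. Since $\varphi\le\psi$ pointwise, every term satisfies $\varphi(f_i\circ\dots\circ f_1(x))\le \psi(f_i\circ\dots\circ f_1(x))$. Summing over $i=1,\dots,n$ gives
\[
\varphi_n(x)\le\psi_n(x)\quad\text{for all } x\in X .
\]

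Next, fix $\delta\in\mathcal W$ and a finite $(n,\delta)_M$-separated set $E\subset X$. The family of admissible sets $E$ depends only on $\delta$ and on the maps $f_i$, not on the potential. Because the exponential and the logarithm are increasing, the pointwise inequality yields $p(\varphi_n,E)\le p(\psi_n,E)$. Taking the supremum over all $(n,\delta)_M$-separated sets $E$ then gives
\[
P_{n,\delta}(f_{1,\infty},\varphi)\le P_{n,\delta}(f_{1,\infty},\psi).
\]

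Finally, divide by $n$ and take $\limsup_{n\to\infty}$ to obtain $P_\delta(f_{1,\infty},\varphi)\le P_\delta(f_{1,\infty},\psi)$ for every $\delta\in\mathcal W$. Taking $\sup_{\delta\in\mathcal W}$ gives $P_{Mis}(f_{1,\infty},\varphi)\le P_{Mis}(f_{1,\infty},\psi)$. Two details need a brief check. First, the inequality must survive the limsup, which is clear since a termwise inequality of sequences is preserved by $\limsup$. Second, the values may be $+\infty$, but monotonicity still makes sense in the extended reals. There is no real obstacle here; the only point to state explicitly is that the class of separated sets is the same for both potentials, so the suprema are taken over the same index set.
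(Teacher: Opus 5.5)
Your proposal is correct and follows essentially the same route as the paper. Both arguments run as follows: the termwise inequality of the sums $\sum_{i=1}^n \varphi(f_i\circ\dots\circ f_1(x))\le\sum_{i=1}^n \psi(f_i\circ\dots\circ f_1(x))$, then monotonicity of $\exp$ and $\log$, then the supremum over $(n,\delta)_M$-separated sets giving $P_{n,\delta}(f_{1,\infty},\varphi)\le P_{n,\delta}(f_{1,\infty},\psi)$, and finally $\limsup_{n\to\infty}\frac1n$ followed by $\sup_{\delta\in\mathcal W}$. Your explicit remark that the family of separated sets does not depend on the potential is a point the paper leaves implicit.
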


 \begin{proof} Choose a neighborhood $\delta \in \mathcal{W}$ and an $(n,\delta)_{M}$-separated set $E.$ Then  for $x \in E$ and potentials $\varphi,\psi \in C(X)$ with $\varphi \leq\psi$ we get the following inequalities 
 $$\sum_{i=1}^n \varphi[f_i \circ ...\circ f_1(x)] \le \sum_{i=1}^n \psi[f_i \circ ...\circ f_1(x)],$$
 $$\sum_{x \in E}\exp \sum_{i=1}^n \varphi[f_i \circ ...\circ f_1(x)] \le \sum_{x \in E}\exp \sum_{i=1}^n \psi[f_i \circ ...\circ f_1(x)],$$
  $$\log \sum_{x \in E}\exp \sum_{i=1}^n \varphi[f_i \circ ...\circ f_1(x)] \le \log \sum_{x \in E}\exp \sum_{i=1}^n \psi[f_i \circ ...\circ f_1(x)],$$
 $$\sup_{E} \{\log \sum_{x \in E}\exp \sum_{i=1}^n \varphi[f_i \circ ...\circ f_1(x)] \}\le \sup_{E} \{\log \sum_{x \in E}\exp \sum_{i=1}^n \psi[f_i \circ ...\circ f_1(x)]\}.$$
 In other words, we obtained the inequality $P_{n,\delta}(f_{1,\infty},\varphi)\le P_{n,\delta}(f_{1,\infty},\psi),$ which yields
 $$P_{\delta}(f_{1,\infty},\varphi)=\limsup_{n \to \infty}\frac{1}{n}P_{n,\delta}(f_{1,\infty},\varphi)\le \limsup_{n \to \infty}\frac{1}{n}P_{\delta}(f_{1,\infty},\psi)=P_{\delta}(f_{1,\infty},\psi).$$ 
 Applying the supremum over $\delta \in \mathcal{W}$ to both sides of the inequality we get
 $$P_{Mis}(f_{1,\infty},\varphi)=\sup_{\delta \in \mathcal{W}}P_{\delta}(f_{1,\infty},\varphi) \le \sup_{\delta \in \mathcal{W}}P_{\delta}(f_{1,\infty},\psi)=P_{Mis}(f_{1,\infty}, \psi).$$
 The proof of the lemma is complete.
 
 \end{proof} 

\begin{lemma}\rm{For any $\varphi \in C(X)$ and $c\in \mathbb R$ the following equality holds $$P_{Mis}(f_{1,\infty},\varphi+c)=  P_{Mis}(f_{1,\infty}, \varphi)+c.$$}
\end{lemma}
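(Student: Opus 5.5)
The plan is to follow the proof of the translation-invariance lemma for $\Ptop$ step by step. The only change is that the Misiurewicz pressure is built from the logarithmic quantity $p(\phi_n,E)$ and takes a supremum over $\delta \in \mathcal{W}$ instead of a limit as $\varepsilon \to 0^+$.

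First fix $n \in \mathbb{N}$, $x \in X$ and $c \in \mathbb{R}$. Because each term of the Birkhoff-type sum gains exactly $c$, we have
$$(\varphi+c)_n(x)=\sum_{i=1}^n (\varphi+c)(f_i\circ\cdots\circ f_1(x)) = \varphi_n(x)+nc.$$
Next take any finite $(n,\delta)_M$-separated set $E$. Summing the exponentials gives $\sum_{x\in E} e^{(\varphi+c)_n(x)} = e^{nc}\sum_{x\in E}e^{\varphi_n(x)}$, and taking logarithms yields $p((\varphi+c)_n,E)=p(\varphi_n,E)+nc$.

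The notion of being $(n,\delta)_M$-separated depends only on $\delta$, $n$ and the maps $f_i$, and not on the potential. So the supremum defining $P_{n,\delta}$ is taken over the same family of sets for both potentials, and we obtain $P_{n,\delta}(f_{1,\infty},\varphi+c)=P_{n,\delta}(f_{1,\infty},\varphi)+nc$.

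We then divide by $n$. Since $c$ is a constant, it passes through the $\limsup$, giving $P_\delta(f_{1,\infty},\varphi+c)=P_\delta(f_{1,\infty},\varphi)+c$ for every $\delta\in\mathcal{W}$. Finally, taking the supremum over $\delta \in \mathcal{W}$, the constant shift commutes with the supremum, so $P_{Mis}(f_{1,\infty},\varphi+c)=P_{Mis}(f_{1,\infty},\varphi)+c$.

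There is no real obstacle here. The only point needing a remark is that these identities remain meaningful if some of the quantities are $+\infty$, because adding a finite $c$ preserves $+\infty$. Under the standing assumption $h_{Mis}(f_{1,\infty})<\infty$, the bounds from the monotonicity lemma already proved, applied with $\inf\varphi\le\varphi\le\sup\varphi$, show that $P_{Mis}(f_{1,\infty},\varphi)$ is finite, so the equality holds in $\mathbb{R}$.
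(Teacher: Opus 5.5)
Your proof is correct and follows the same route as the paper. The Birkhoff sum shifts by $nc$, the factor $e^{nc}$ comes out of the sum over a separated set, and the shift then passes through the supremum over $(n,\delta)_M$-separated sets, the $\limsup$ after dividing by $n$, and the supremum over $\delta\in\mathcal{W}$. Your version is in fact slightly more careful: you explicitly pass through the supremum defining $P_{n,\delta}$, using that separatedness does not depend on the potential, whereas the paper's written argument skips this step and takes the $\limsup$ of the expression for a fixed set $E$.
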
 

\begin{proof} Fix a potential $\varphi \in C(X).$ For a neighborhood $\delta \in \mathcal{W},$  an $(n,\delta)_{M}$-separated set $E,$ and $c \in \mathbb{R}$ we get
$$\log \sum_{x \in E}\exp \sum_{i=1}^n (\varphi+c)[f_i \circ ...\circ f_1(x)] =\log \sum_{x \in E}\exp\{n\cdot c+ \sum_{i=1}^n \varphi[f_i \circ ...\circ f_1(x)]\} =$$
$$\log \sum_{x \in E}\exp\{n\cdot c\}\cdot \exp \{ \sum_{i=1}^n \varphi[f_i \circ ...\circ f_1(x)]\} =n\cdot c+ \log \sum_{x \in E}\exp\{ \sum_{i=1}^n \varphi[f_i \circ ...\circ f_1(x)]\},$$
therefore
$$\limsup_{n \to \infty}\frac{1}{n}\log \sum_{x \in E}\exp \sum_{i=1}^n (\varphi+c)[f_i \circ ...\circ f_1(x)] =c + \limsup_{n \to \infty}\frac{1}{n}\log \sum_{x \in E}\exp \sum_{i=1}^n \varphi[f_i \circ ...\circ f_1(x)]$$
Applying the supremum over $\delta \in \mathcal{W}$ to both sides of the above equality we get
$$P_{Mis}(f_{1,\infty},\varphi+c)=  P_{Mis}(f_{1,\infty}, \varphi)+c.$$
\end{proof}

\begin{lemma}\rm{For any $\varphi, \psi \in C(X)$ and $a\in
[0,1]$ we have $$P_{Mis}(f_{1,\infty},a\varphi+(1-a)\psi)\le  a \cdot P_{Mis}(f_{1,\infty}, \varphi)+(1-a)\cdot P_{Mis}(f_{1,\infty}, \psi).$$}
\end{lemma}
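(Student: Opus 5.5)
The plan is to imitate the proof of the convexity lemma for $\Ptop$ in Section 4, working one level at a time in the definition of $P_{Mis}$: first for a fixed separated set, then for $P_{n,\delta}$, then for $P_\delta$, and finally for the supremum over $\delta \in \mathcal{W}$. The endpoint cases $a=0$ and $a=1$ are trivial, so assume $a\in(0,1)$.

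\emph{Fixed separated set.} Fix $\delta\in\mathcal{W}$, $n\in\mathbb{N}$ and a finite $(n,\delta)_M$-separated set $E$. Since $(a\varphi+(1-a)\psi)_n = a\varphi_n+(1-a)\psi_n$, Hölder's inequality with exponents $p=1/a$ and $q=1/(1-a)$ gives
$$\sum_{x\in E} e^{a\varphi_n(x)}e^{(1-a)\psi_n(x)} \le \Big(\sum_{x\in E}e^{\varphi_n(x)}\Big)^{a}\Big(\sum_{x\in E}e^{\psi_n(x)}\Big)^{1-a}.$$
Taking logarithms yields
$$p\big((a\varphi+(1-a)\psi)_n,E\big)\le a\,p(\varphi_n,E)+(1-a)\,p(\psi_n,E).$$

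\emph{Level of $P_{n,\delta}$.} Each term on the right is bounded by the corresponding supremum over $(n,\delta)_M$-separated sets. Taking the supremum over $E$ on the left therefore gives
$$P_{n,\delta}(f_{1,\infty},a\varphi+(1-a)\psi)\le a\,P_{n,\delta}(f_{1,\infty},\varphi)+(1-a)\,P_{n,\delta}(f_{1,\infty},\psi).$$

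\emph{Level of $P_\delta$.} Divide by $n$ and take $\limsup_{n\to\infty}$. Subadditivity of $\limsup$, i.e. $\limsup(a_n+b_n)\le\limsup a_n+\limsup b_n$ for nonnegative weights, gives
$$P_\delta(f_{1,\infty},a\varphi+(1-a)\psi)\le aP_\delta(f_{1,\infty},\varphi)+(1-a)P_\delta(f_{1,\infty},\psi).$$
Here one should note that the quantities are finite, or at least not of the form $\infty-\infty$. This holds because the right-hand quantities are bounded below by $\min\varphi$ and $\min\psi$, respectively, since a one-point set is separated.

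\emph{Supremum over $\delta$.} Each $P_\delta$ on the right is at most $P_{Mis}$. Since the left side is bounded by $aP_{Mis}(f_{1,\infty},\varphi)+(1-a)P_{Mis}(f_{1,\infty},\psi)$ for every $\delta$, taking $\sup_{\delta\in\mathcal{W}}$ on the left finishes the proof.

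The main obstacle is minor. In the $\Ptop$ case a limit in $\varepsilon$ was used, and one could worry about handling the two potentials with different $\delta$'s. This issue does not arise here, because the inequality is proved for each fixed $\delta$ before taking the supremum, and the supremum of the left side is dominated termwise. The only care needed is the $\limsup$ subadditivity and the treatment of possibly infinite values, which the previous step addresses.
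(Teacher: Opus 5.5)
Your proposal is correct and follows essentially the same route as the paper: Hölder's inequality on a fixed $(n,\delta)_M$-separated set, then the supremum over such sets, the $\limsup$ in $n$, and finally the supremum over $\delta\in\mathcal{W}$. Your remark ruling out $\infty-\infty$ (via the lower bound $P_\delta(f_{1,\infty},\varphi)\ge\min\varphi$ from one-point sets) is a small point of care that the paper leaves implicit.
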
 

\begin{proof} Let  $\varphi, \psi \in C(X)$ and $a\in[0,1].$ 
For a neighborhood $\delta \in \mathcal{W},$  an $(n,\delta)_{M}$-separated set $A \subset X$ we get respectively 
$$\sum_{i=1}^n (a \cdot \varphi+(1-a)\cdot \psi)[f_i \circ ...\circ f_1(x)] =a \cdot \sum_{i=1}^n \varphi[f_i \circ ...\circ f_1(x)] +(1-a)\sum_{i=1}^n \psi[f_i \circ ...\circ f_1(x)],$$

$$\sum_{x \in A} e^{ \sum_{i=1}^n (a \cdot \varphi+(1-a)\cdot \psi)[f_i \circ ...\circ f_1(x)]} =\sum_{x \in A} e^{ a \cdot \sum_{i=1}^n  \varphi[f_i \circ ...\circ f_1(x)]  
  +(1-a) \sum_{i=1}^n\psi[f_i \circ ...\circ f_1(x)]} \le$$
$$ \{\sum_{x \in A} e^{ \sum_{i=1}^n (\varphi[f_i \circ ...\circ f_1(x)] }\}^a \cdot \{\sum_{x \in A} e^{ \sum_{i=1}^n (\psi[f_i \circ ...\circ f_1(x)]} \}^{1-a},               $$
where the last inequality is due to Holder's inequality. Thus
$$\log \sum_{x \in A} e^{ \sum_{i=1}^n (a \cdot \varphi+(1-a)\cdot \psi)[f_i \circ ...\circ f_1(x)]} \le a \log \sum_{x \in A} e^{ \sum_{i=1}^n (\varphi[f_i \circ ...\circ f_1(x)] } +(1-a) \log \sum_{x \in A} e^{ \sum_{i=1}^n (\varphi[f_i \circ ...\circ f_1(x)] }.$$
Taking supremum of both sides over the sets $A$ which are $(n,\delta)_M-$separated and applying the definition of  $P_{n,\delta}(f_{1,\infty},\phi)$ we obtain
$$P_{n,\delta}(f_{1,\infty},a\cdot \varphi +(1-a)\cdot \psi) \le a \cdot P_{n,\delta}(f_{1,\infty},\varphi) +(1-a)\cdot P_{n,\delta}(f_{1,\infty},\psi),$$

$$ \limsup_{n \to \infty}  \frac{1}{n}P_{n,\delta}(f_{1,\infty},a\cdot \varphi +(1-a)\cdot \psi) \le a \limsup_{n \to \infty}\frac{1}{n}\cdot P_{n,\delta}(f_{1,\infty},\varphi) +(1-a)\limsup_{n \to \infty}\cdot
\frac{1}{n} P_{n,\delta}(f_{1,\infty},\psi).$$
In this way we have obtained the inequalities
$$P_{\delta}(f_{1,\infty},a\cdot \varphi +(1-a)\cdot \psi) \le a \cdot P_{\delta}(f_{1,\infty},\varphi) +(1-a)\cdot P_{\delta}(f_{1,\infty},\psi),$$
$$\sup_{\delta \in  \mathcal{W}} P_{\delta}(f_{1,\infty},a\cdot \varphi +(1-a)\cdot \psi) \le a \cdot \sup_{\delta \in  \mathcal{W}} P_{\delta}(f_{1,\infty},\varphi) +(1-a)\cdot \sup_{\delta \in  \mathcal{W}} P_{\delta}(f_{1,\infty},\psi).$$

The proof of the lemma is complete.
\end{proof}

\subsection{Proof of Theorem C} 
\begin{proof} Since $ P_{Mis}(f_{1,\infty}, \cdot) : C(X) \to \mathbb{R}$ is a pressure function
the unifying statement of Theorem~\ref{thm:main-lemma} provides an abstract variational principle 
for the nonautonomous discrete dynamical system determined by $f_{1,\infty}= \{ f_n:X \to X \}^{\infty}_{n=1}$  and Misiurewicz pressure $P_{Mis}(f_{1,\infty}, \varphi)$
with respect to $\varphi \in C(X).$
\end{proof}

\subsection{Proof of Corollary 2}
\begin{proof} The proof of Corollary 2 is very similar to the proof of Corollary 1.
\end{proof}

\subsection{Proof of Theorem D} 
\begin{proof}
The proof of Theorem D is very similar to the proof of Theorem B. We leave this proof to the reader as a simple exercise.
\end{proof}
\bigskip
\noindent
 {\bf Acknowledgements}.The author is grateful to the anonymous referees  for reading the manuscript very carefully,
 for the apposite comments and valuable suggestions that have helped   to improve the manuscript.
\color{black}


\end{document}